\newtheorem{theorem}{Theorem}[section]
\newtheorem{lemma}[theorem]{Lemma}
\newtheorem{corollary}[theorem]{Corollary}
\newtheorem{question}[theorem]{Question}
\theoremstyle{definition}
\newtheorem{example}[theorem]{Example}
\newcommand{\eps}{\varepsilon}
\newcommand{\Z}{\mathbb Z}
\newcommand{\kabel}[1]{\equiv_{#1}}
\newcommand{\growth}[1]{\mathcal P_{#1}}
\DeclareMathOperator{\sep}{sep}
\begin{document}

\title{Separating Many Words by Counting Occurrences of Factors}

\author{Aleksi Saarela\footnote{Department of Mathematics and Statistics, University of Turku, 20014 Turku, Finland, \texttt{amsaar@utu.fi}}}

\maketitle

\begin{abstract}
For a given language $L$, we study the languages $X$ such that for all distinct words $u, v \in L$, there exists a word $x \in X$ that appears a different number of times as a factor in $u$ and in $v$. In particular, we are interested in the following question: For which languages $L$ does there exist a finite language $X$ satisfying the above condition? We answer this question for all regular languages and for all sets of factors of infinite words.
\end{abstract}

\section{Introduction}

The motivation for this article comes from three sources.

First, a famous question about finite automata is the \emph{separating words problem}.
If $\sep(u, v)$ is the size of the smallest DFA
that accepts one of the words $u, v$ and rejects the other,
then what is the maximum of the numbers $\sep(u, v)$
when $u$ and $v$ run over all words of length at most $n$?
This question was first studied by Goral\v{c}\'{i}k and Koubek~\cite{goko86},
and they proved an upper bound $o(n)$ and a lower bound $\Omega(\log n)$.
The upper bound was improved to $O(n^{2 / 5} (\log n)^{3 / 5})$ by Robson~\cite{ro89},
and this remains the best known result.
A survey and some additional results can be found
in the article by Demaine, Eisentat, Shallit and Wilson~\cite{deeishwi11}.
Several variations of the problem exist.
For example,
NFAs~\cite{deeishwi11} or context-free grammars~\cite{cuperosh99} could be used instead of DFAs.
More generally, we could try to separate two disjoint languages $A$ and $B$
by providing a language $X$ from some specified family of languages such that
$A \subseteq X$ and $B \cap X = \varnothing$.
As an example related to logic, see~\cite{plze16}.
Alternatively, we could try to separate many words $w_1, \dots, w_k$
by providing languages $X_1, \dots, X_k$ with some specific properties such that
$w_i \in X_j$ if and only if $i = j$.
As an example, see~\cite{hoko11}.

Let $|w|_x$ denote the number of occurrences of a factor $x$ in a word $w$.
A simple observation that can be made about the separating words problem is that
if $|u|_x \ne |v|_x$, then $|u|_x \not\equiv |v|_x \pmod p$ for some relatively small prime $p$
(more specifically, $p = O(\log(|uv|))$),
and the number of occurrences modulo a prime can be easily counted by a DFA.
So if $u$ and $v$ have a different number of occurrences of some short factor $x$,
then $\sep(u, v)$ is small, see~\cite{deeishwi11} for more details.
Unfortunately, this approach does not provide any general bounds,
and more complicated ideas are required to prove the results mentioned in the previous paragraph.

In this article, we are interested in the question of
how well words can be separated
if we forget about automata and only consider the simple idea of counting occurrences of factors.
For any two distinct words $u$ and $v$ of length $n$,
we can find a factor $x$ of length $\lfloor n / 2 \rfloor + 1$ or less such that $|u|_x \ne |v|_x$.
A proof of this simple fact can be found in an article by Manuch~\cite{ma00}.
See~\cite{vygi14} for a variation
where also the positions of the occurrences modulo a certain number are taken into account.
The question becomes more interesting
if we want to separate more than two words (possibly infinitely many) at once,
and we can do this by counting the numbers of occurrences of more than one factor.
We are particularly interested in the following question.

\begin{question} \label{que:ssf}
    Given a language $L$, does there exist a finite language $X$ such that
    for all distinct words $u, v \in L$, there exists $x \in X$ such that $|u|_x \ne |v|_x$?
\end{question}

The second source of motivation is an old guessing game for two players,
let us call them Alice and Bob:
From a given set of options, Alice secretly picks one.
Bob is allowed to ask any yes-no questions,
and he is trying to figure out what Alice picked.
Two famous versions are the game ``Twenty Questions''
and the children's board game ``Guess Who''.
In their simplest forms, these kinds of games are easy to analyze:
The required number of questions
is logarithmic with respect to the number of options.
However, many more complicated variations have been studied.
As examples, see~\cite{pe87} and~\cite{amblsc02}.

In this article, we are interested in a variation
where the options are words and, instead of arbitrary yes-no questions,
Bob is allowed to ask for the number of occurrences of any factor in the word Alice has chosen.
Usually in games like this, Bob can decide every question based on the previous answers,
but we can also require that Bob needs to decide all the questions in advance.

\begin{question} \label{que:game}
    Given a language from which Alice has secretly picked one word $w$,
    can Bob find a finite language $X$ such that
    the answers to the questions ``What is $|w|_x$?'' for all $x \in X$
    are guaranteed to reveal the correct word $w$?
\end{question}

It is easy to see that Questions~\ref{que:ssf} and~\ref{que:game} are equivalent.
In this article, we will use the formulation of Question~\ref{que:ssf}
instead of talking about games.

The third source of motivation is $k$-abelian complexity.
For a positive integer $k$,
words $u$ and $v$ are said to be $k$-abelian equivalent
if $|u|_x = |v|_x$ for all factors $x$ of length at most $k$.
The factor complexity of an infinite word $w$
is a function that maps a number $n$ to the number of factors of $w$ of length $n$.
The $k$-abelian complexity of $w$ similarly
maps a number $n$ to the number of $k$-abelian equivalence classes of factors of $w$ of length $n$.
$k$-abelian equivalence was first studied by Karhum\"aki~\cite{ka80}.
Many basic properties were proved
by Karhum\"aki, Saarela and Zamboni in the article~\cite{kasaza13jcta},
where also $k$-abelian complexity was introduced.
Several articles have been published
about $k$-abelian complexity~\cite{cakasa17ejc,chluwu18,kasaza17esik},
and about abelian complexity (that is, the case $k = 1$) already earlier~\cite{risaza11}.
Perhaps the most interesting one from the point of view of this paper is~\cite{cakasa17ejc},
where the relationships between the $k$-abelian complexities of an infinite word
for different values of $k$ were studied.
However, the following simple question was not considered in that article.

\begin{question} \label{que:compl}
    Given an infinite word, does there exist a number $k \geq 1$ such that
    the $k$-abelian complexity of the word is the same as the usual factor complexity of the word?
\end{question}

For a given language, we can define its growth function and $k$-abelian growth function
as concepts analogous to the factor complexity and $k$-abelian complexity of an infinite word.
Then the above question can be generalized.
We are specifically interested in the case of regular languages.
Some connections between $k$-abelian equivalence and regular languages
have been studied by Cassaigne, Karhum\"aki, Puzynina and Whiteland~\cite{cakapuwh17}.

\begin{question} \label{que:growth}
    Given a language, does there exist a number $k \geq 1$ such that
    the growth function of the language
    is the same as the $k$-abelian growth function of the language?
\end{question}

In this article,
we first define some concepts related to Question~\ref{que:ssf}
and prove basic properties about them.
As stated above, Questions~\ref{que:ssf} and~\ref{que:game} are equivalent,
and so is Question~\ref{que:growth}, but this requires a short proof.
We answer these questions for two families of languages:
Sets of factors of infinite words (this corresponds to Question~\ref{que:compl})
and regular languages.
In the first case, the result is not surprising:
The answer is positive if and only if the word is ultimately periodic.
Our main result is a characterization in the case of regular languages:
The answer is positive if and only if the language does not have a subset of the form
$x w^* y w^* z$ for any words $w, x, y, z$ such that $wy \ne yw$.

\section{Preliminaries}

Throughout the article, we use the symbol $\Sigma$ to denote an alphabet.
All words are over $\Sigma$ unless otherwise specified.

\paragraph{Primitive words and Lyndon words.}
A nonempty word is \emph{primitive} if it is not a power of any shorter word.
The \emph{primitive root} of a nonempty word $w$
is the unique primitive word $p$ such that $w \in p^+$.
It is well known that nonempty words $u, v$ have the same primitive root
if and only if they commute, that is, $uv = vu$.

Words $u$ and $v$ are \emph{conjugates} if there exist words $p, q$ such that $u = pq$ and $v = qp$.
All conjugates of a primitive word are primitive.
If two nonempty words are conjugates, then their primitive roots are conjugates.

We can assume that the alphabet $\Sigma$ is ordered.
This order can be extended to a lexicographic order of $\Sigma^*$.
A \emph{Lyndon word} is a primitive word
that is lexicographically smaller than all of its other conjugates.
We use Lyndon words
when we need to pick a canonical representative from the conjugacy class of a primitive word.
The fact that this representative happens to be lexicographically minimal
is not actually important in this article.

The \emph{Lyndon root} of a nonempty word $w$
is the unique Lyndon word that is conjugate to the primitive root of $w$.
We state here the well-known periodicity theorem of Fine and Wilf~\cite{fiwi65},
and we use it to prove a simple result about Lyndon roots.

\begin{theorem}[Fine and Wilf] \label{thm:finewilf}
    Let $u, v$ be nonempty words.
    If the infinite words $u^\omega$ and $v^\omega$ have a common prefix of length
    \begin{math}
        |uv| - \gcd(|u|, |v|),
    \end{math}
    then $u$ and $v$ are powers of a common word of length $\gcd(|u|, |v|)$.
\end{theorem}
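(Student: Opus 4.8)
The plan is to recast the statement in terms of periods of a single finite word and then prove the resulting periodicity lemma by a connectivity argument. First I would pass to the common prefix $w$ of $u^\omega$ and $v^\omega$ of length $\ell = |uv| - \gcd(|u|,|v|)$, which exists by hypothesis. Being a prefix of $u^\omega$, the word $w$ satisfies $w_i = w_{i+|u|}$ whenever both indices are at most $\ell$; that is, $w$ has period $|u|$, and symmetrically period $|v|$. Conversely, the conclusion that $u$ and $v$ are powers of a common word $t$ of length $d := \gcd(|u|,|v|)$ is equivalent to $w$ having period $d$: once $w$ has period $d$ and $d$ divides both $|u|$ and $|v|$, the length-$d$ prefix $t$ of $w$ satisfies $u = t^{|u|/d}$ and $v = t^{|v|/d}$ (this uses $\ell \ge \max(|u|,|v|)$, which holds because $|u|,|v| \ge d$), and the reverse implication is immediate since then $u^\omega = t^\omega = v^\omega$. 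So it suffices to prove the following purely combinatorial fact: if a word of length $\ell$ has periods $p$ and $q$ with $\ell \ge p + q - \gcd(p,q)$, then it has period $\gcd(p,q)$.

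For this, I would index the letter positions by $\{0,1,\dots,\ell-1\}$ and form a graph in which $i$ is joined to $i+p$ and to $i+q$ whenever the larger index is still below $\ell$. Having period $p$ or $q$ means exactly that $w$ takes equal values at the two endpoints of every edge, so $w$ is constant on each connected component. Writing $g = \gcd(p,q)$, every edge changes a position by a multiple of $g$, so each component is contained in a single residue class modulo $g$; rescaling by $g$ and dividing $p,q$ by $g$ then reduces everything to the coprime case. It therefore suffices to show that when $\gcd(p,q)=1$ and $\ell \ge p+q-1$, all positions $0,\dots,\ell-1$ lie in one component, whence $w$ is constant and in particular has period $1 = g$.

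The hard part is this connectivity statement, and it is exactly where the tight bound $p+q-\gcd$ is used. The plan is to connect an arbitrary position to position $0$ by a walk using steps $+p$ and $-q$, staying inside the window $\{0,\dots,\ell-1\}$. By B\'ezout a suitable combination of such steps realizes any prescribed net displacement, and the key point is to order the $+p$ and $-q$ steps greedily, adding $p$ when the current position would otherwise drop below $0$ and subtracting $q$ when it would otherwise reach $\ell$, so that every intermediate position stays in range. A short counting argument shows that such a balanced walk never needs a window wider than $p+q-1$ positions, which is precisely what $\ell \ge p+q-1$ guarantees; this is the only place the hypothesis is spent, and getting the boundary bookkeeping exactly right is the main obstacle. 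Once connectivity is established the coprime case follows, the general periodicity lemma follows by the rescaling above, and the original statement follows from the translation in the first paragraph.
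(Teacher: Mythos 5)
The paper itself gives no proof of Theorem~\ref{thm:finewilf}; it is quoted as a classical result with a citation to Fine and Wilf, so your attempt can only be judged on its own terms. Your reductions are all correct: the common prefix $w$ of length $\ell=|uv|-\gcd(|u|,|v|)$ has periods $p=|u|$ and $q=|v|$; since $\ell=p+q-d\ge\max(p,q)$, the words $u,v$ are prefixes of $w$ and the conclusion is equivalent to $w$ having period $d=\gcd(p,q)$; and the reduction to the coprime case by restricting to residue classes modulo $g$ is sound (each class contains at least $p/g+q/g-1$ positions, which is the needed bound, though you should verify this explicitly).

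The genuine gap is in the connectivity step, and it is not merely ``boundary bookkeeping'': the walk you propose does not exist in general. Take $p=2$, $q=3$, $\ell=p+q-1=4$, positions $\{0,1,2,3\}$. The period edges are $\{0,2\}$ and $\{1,3\}$ (differences of $p$) and $\{0,3\}$ (difference of $q$), and the graph is indeed connected. But from position $2$ the only moves of the form $+p$ or $-q$ land at $4$ and $-1$, both outside the window, so no walk using only steps $+p$ and $-q$ ever leaves position $2$; symmetrically, the set of positions reachable from $0$ by such steps is just $\{0,2\}$, missing $1$ and $3$. Hence the claim that a greedily ordered walk with steps $+p$ and $-q$ connects everything within a window of width $p+q-1$ is false, and connectivity genuinely requires all four step directions $\pm p,\pm q$ (here $2\to 0\to 3\to 1$ uses $-p,+q,-p$). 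To repair the argument you need a different proof of the coprime case, for instance: observe that the graph on $\{0,\dots,p+q-2\}$ has exactly $(q-1)+(p-1)=|V|-1$ edges, so connectivity is equivalent to acyclicity, and rule out cycles using $\gcd(p,q)=1$; or, more classically, argue by strong induction on $p+q$ in Euclidean fashion, replacing the pair of periods $(p,q)$ with $(p,q-p)$ on a suitable prefix.
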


\begin{lemma} \label{lem:overlap-lyndon}
    Let $u, v$ be nonempty words.
    If $u^m$ and $v^n$ have a common factor of length $|uv|$,
    then $u$ and $v$ have the same Lyndon root.
\end{lemma}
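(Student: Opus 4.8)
The plan is to reduce everything to the periodicity theorem of Fine and Wilf (Theorem~\ref{thm:finewilf}). Write $p$ and $q$ for the primitive roots of $u$ and $v$, so that $u \in p^+$ and $v \in q^+$. The Lyndon root of $u$ is the unique Lyndon word conjugate to $p$, and likewise for $v$ and $q$; since conjugacy is an equivalence relation and each conjugacy class contains exactly one Lyndon word, $u$ and $v$ have the same Lyndon root precisely when $p$ and $q$ are conjugates. Hence it suffices to prove that $p$ and $q$ are conjugates. Let $w$ be the common factor of $u^m$ and $v^n$ of length $|uv|$. As $u^m$ is a power of $p$ and $v^n$ is a power of $q$, the word $w$ occurs both in $p^\omega$ and in $q^\omega$.

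First I would turn these occurrences, which sit at arbitrary offsets, into occurrences at the origin by rotating the roots. If $w$ occurs in $p^\omega$ starting at position $i$, then $w$ is a prefix of $(p')^\omega$, where $p'$ is the conjugate of $p$ obtained by cyclically shifting it by $i$ letters; similarly $w$ is a prefix of $(q')^\omega$ for a suitable conjugate $q'$ of $q$. Thus $(p')^\omega$ and $(q')^\omega$ share the prefix $w$, whose length satisfies $|uv| = |u| + |v| \ge |p| + |q| \ge |p'q'| - \gcd(|p'|, |q'|)$, using $|p'| = |p| \le |u|$ and $|q'| = |q| \le |v|$. Theorem~\ref{thm:finewilf} then yields a word $t$ of length $\gcd(|p'|, |q'|)$ such that $p'$ and $q'$ are both powers of $t$.

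Finally, $p'$ and $q'$ are primitive, being conjugates of the primitive words $p$ and $q$, so each is forced to equal $t$; hence $p' = q'$. Since $p$ is conjugate to $p' = q'$ and $q'$ is conjugate to $q$, the roots $p$ and $q$ are conjugates, as required. The main obstacle is the bookkeeping in the second step: one must pass from a common factor sitting at arbitrary positions in two periodic words to a genuine common \emph{prefix} of two periodic words, so that the hypothesis of Theorem~\ref{thm:finewilf} applies, and one must check that the shared length $|uv|$ meets the Fine--Wilf bound. Once the roots have been aligned in this way, the primitivity argument closing the proof is immediate.
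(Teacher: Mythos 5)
Your proof is correct and follows essentially the same route as the paper: both arguments rotate the roots so that the common factor becomes a common prefix of two periodic words, verify that its length $|uv|$ meets the Fine--Wilf bound, and conclude conjugacy of the primitive roots. The only cosmetic difference is that you pass to the primitive roots $p, q$ before taking conjugates, whereas the paper works with conjugates $u_1, v_1$ of $u, v$ and extracts the common primitive root at the end.
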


\begin{proof}
    A factor of $u^m$ of length $|uv|$ is of the form $(u_1)^i u_2$,
    where $u_1$ is a conjugate of $u$, $u_2$ is a prefix of $u_1$, and $i \geq 1$.
    Similarly, a factor of $v^n$ of length $|uv|$ is of the form $(v_1)^j v_2$,
    where $v_1$ is a conjugate of $v$, $v_2$ is a prefix of $v_1$, and $j \geq 1$.
    If these factors are the same, then $(u_1)^i u_2 = (v_1)^j v_2$,
    so $u_1^\omega$ and $v_1^\omega$ have a common prefix of length $|uv|$.
    It follows from Theorem~\ref{thm:finewilf} that
    $u_1$ and $v_1$ are powers of a common word and therefore have the same primitive root.
    This primitive root is conjugate to the primitive roots of $u$ and $v$,
    so $u$ and $v$ have the same Lyndon root.
\end{proof}

\paragraph{Occurrences.}
Let $u$ and $w$ be words.
An \emph{occurrence of $u$ in $w$} is a triple $(x, u, y)$ such that $w = xuy$.
The number of occurrences of $u$ in $w$ is denoted by $|w|_u$.

Let $(x, u, y)$ and $(x', u', y')$ be occurrences in $w$.
If
\begin{equation*}
    \max(|x|, |x'|) < \min(|xu|, |x' u'|),
\end{equation*}
then we say that these occurrences have an \emph{overlap} of length
\begin{equation*}
    \min(|xu|, |x' u'|) - \max(|x|, |x'|).
\end{equation*}
If $|x| \geq |x'|$ and $|y| \geq |y'|$,
then we say that $(x, u, y)$ is \emph{contained} in $(x', u', y')$.

If $(x, u, y)$ is an occurrence in $w$ and $u \in L$,
then $(x, u, y)$ is an \emph{$L$-occurrence in $w$}.
It is a \emph{maximal $L$-occurrence in $w$}
if it is not contained in any other $L$-occurrence in $w$.

It is well known that
if $p$ is a primitive word, then $p$ cannot be a factor of $p^2$ in a nontrivial way,
or more formally,
$p^2$ does not have any other $p$-occurrences
than the trivial ones $(\eps, p, p)$ and $(p, p, \eps)$.
The following lemma is an easy consequence of this fact.

\begin{lemma} \label{lem:overlap-maxocc}
    Let $w$ be a word and $p$ be a primitive word.
    If two $p^+$-occurrences in $w$ have an overlap of length at least $|p|$,
    then they are contained in the same maximal $p^+$-occurrence.
    Moreover, every $p^+$-occurrence in $w$ is contained in exactly one maximal $p^+$-occurrence.
\end{lemma}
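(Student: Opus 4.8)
The plan is to reduce everything to a single ``phase-alignment'' claim and then do light bookkeeping on top of it. Fix coordinates by identifying an occurrence $(x, p^a, y)$ with $a \geq 1$ with the interval of positions $[\,s,\,s + a|p|\,]$ it covers, where $s = |x|$. Take two $p^+$-occurrences with left endpoints $s_1 \leq s_2$ and right endpoints $e_1 = s_1 + a|p|$, $e_2 = s_2 + b|p|$, overlapping by at least $|p|$; then $\min(e_1, e_2) \geq s_2 + |p|$, so in particular $e_1 \geq s_2 + |p|$. The key observation is that the length-$|p|$ factor of $w$ starting at $s_2$ can be read two ways: inside the second occurrence it is literally $p$, while inside the first occurrence it is the length-$|p|$ factor of $p^a$ beginning at offset $s_2 - s_1$ (which fits, since $s_2 + |p| \leq e_1$).

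The main obstacle, and the only place where primitivity is used, is showing that this offset $s_2 - s_1$ is a multiple of $|p|$. Here I would invoke the well-known fact recalled just before the lemma, that $p^2$ has no $p$-occurrence besides the two trivial ones. Writing $s_2 - s_1 = q|p| + r$ with $0 \leq r < |p|$, the occurrence of $p$ at offset $s_2 - s_1$ inside $p^a$ localizes to a $p$-occurrence at offset $r$ inside a single factor $p^2$ of $p^a$ (the inequality $s_2 + |p| \leq e_1$ guarantees this window lies inside $p^a$ when $r > 0$), and primitivity forces $r = 0$. Hence $s_2 \equiv s_1 \pmod{|p|}$, the two occurrences are in phase, and since their intervals already overlap, their union $[\,s_1,\,\max(e_1, e_2)\,]$ has length a multiple of $|p|$ and is covered by a single power of $p$. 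I would package this as a standalone combination claim: two $p^+$-occurrences overlapping by at least $|p|$ have a common containing $p^+$-occurrence.

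Everything after the combination claim is routine. For the ``moreover'' statement, existence follows from finiteness of $w$: among the finitely many $p^+$-occurrences containing a given one, any one of maximal span is a maximal $p^+$-occurrence, since transitivity of containment shows it is not properly contained in any $p^+$-occurrence. For uniqueness, suppose a $p^+$-occurrence $O$ is contained in two maximal occurrences $M_1, M_2$; because the interval of $O$ has length at least $|p|$ and lies in both, $M_1$ and $M_2$ overlap by at least $|p|$, so the combination claim yields a $p^+$-occurrence containing both, which maximality forces to equal each of them, giving $M_1 = M_2$.

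Finally, the first assertion falls out directly: given two $p^+$-occurrences overlapping by at least $|p|$, the combination claim provides a common containing $p^+$-occurrence, which by existence sits inside a maximal $p^+$-occurrence, and by uniqueness this is the maximal occurrence containing each of the two; hence both are contained in the same maximal $p^+$-occurrence. I expect the phase-alignment step to be the delicate one, so I would prove the combination claim carefully and keep the maximal-occurrence argument brief, relying only on finiteness and transitivity of containment.
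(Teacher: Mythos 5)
Your proposal is correct and follows essentially the same route as the paper's proof: both use primitivity (via the fact that $p^2$ has only trivial $p$-occurrences) to force the offset between the two occurrences to be a multiple of $|p|$, then merge them into a single containing $p^+$-occurrence and deduce the maximality statements from that combination claim. The only difference is cosmetic (interval notation, and an explicit finiteness argument for existence of maximal occurrences that the paper leaves implicit).
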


\begin{proof}
    To prove the first claim,
    let $(x, p^m, y)$ and $(x', p^n, y')$ be two $p^+$-occurrences in $w$ and let $|x| \leq |x'|$.
    If these occurrences have an overlap of length at least $|p|$,
    then the occurrence $(x', p, p^{n - 1} y')$ is contained in $(x, p^m, y)$.
    The number $|x'| - |x|$ must be divisible by $|p|$,
    because otherwise $p$ would be a factor of $p^2$ in a nontrivial way.
    Let $|x'| - |x| = kp$.
    Then $(x, p^{k + n}, y')$ is an occurrence in $w$.
    If $|y| \geq |y'|$,
    then both $(x, p^m, y)$ and $(x', p^n, y')$ are contained in $(x, p^{k + n}, y')$,
    which is contained in some maximal occurrence.
    On the other hand, if $|y| < |y'|$,
    then $(x', p^n, y')$ is contained in $(x, p^m, y)$,
    which is contained in some maximal occurrence.
    This proves the first claim.

    If a $p^+$-occurrence in $w$ is contained in two maximal $p^+$-occurrences,
    then those two maximal occurrences are contained in the same maximal occurrence
    by the first part of the proof.
    By the definition of maximality, these maximal occurrences are actually the same.
    This proves the second claim.
\end{proof}

\paragraph{$k$-abelian equivalence.}
Let $k$ be a positive integer.
Words $u, v \in \Sigma^*$ are \emph{$k$-abelian equivalent}
if $|u|_x = |v|_x$ for all $x \in \Sigma^{\leq k}$.
$k$-abelian equivalence is an equivalence relation
and it is denoted by $\kabel{k}$.

Here are some basic facts about $k$-abelian equivalence (see~\cite{kasaza13jcta}):
$u, v \in \Sigma^{\geq k - 1}$ are $k$-abelian equivalent if and only if
they have a common prefix of length $k - 1$ and $|u|_x = |v|_x$ for all $x \in \Sigma^k$.
The condition about prefixes can be replaced by a symmetric condition about suffixes.
Words of length $2k - 1$ or less are $k$-abelian equivalent if and only if they are equal.
$k$-abelian equivalence is a congruence, that is,
if $u \kabel{k} u'$ and $v \kabel{k} v'$, then $uv \kabel{k} u' v'$.

We are going to use the following simple fact a couple of times
when showing that two words are $k$-abelian equivalent:
If $u, v, w, x \in \Sigma^*$, $|v| = k - 1$, and $|x| = k$, then
\begin{equation*}
    |uvw|_x = |uv|_x + |vw|_x.
\end{equation*}

\begin{example}
    The words $aabab$ and $abaab$ are 2-abelian equivalent:
    They have the same prefix of length one,
    one occurrence of $aa$,
    two occurrences of $ab$,
    one occurrence of $ba$,
    and no occurrences of $bb$.

    The words $aba$ and $bab$ have the same number of occurrences of every factor of length two,
    but they are not 2-abelian equivalent,
    because they have a different number of occurrences of $a$.

    Let $k \geq 1$.
    The words $u = a^k b a^{k - 1}$ and $v = a^{k - 1} b  a^k$ are $k$-abelian equivalent:
    They have the same prefix of length $k - 1$,
    and $|u|_x = 1 = |v|_x$
    if $x = a^k$ or $x = a^i b a^{k - i - 1}$ for some $i \in \{0, \dots, k - 1\}$,
    and $|u|_x = 0 = |v|_x$ for all other factors $x$ of length $k$.
    On the other hand, $u$ and $v$ are not $(k + 1)$-abelian equivalent,
    because they have a different prefix of length $k$.
\end{example}

\paragraph{Growth functions and factor complexity.}
The \emph{growth function} of a language $L$ is the function
\begin{equation*}
    \growth{L}: \Z_{\geq 0} \to \Z_{\geq 0},\ \growth{L}(n) = |L \cap \Sigma^n|
\end{equation*}
mapping a number $n$ to the number of words of length $n$ in $L$.
The \emph{factor complexity} of an infinite word $w$, denoted by $\growth{w}$,
is the growth function of the set of factors of $w$
(technically, the domain of $\growth{w}$ is often defined to be $\Z_+$ instead of $\Z_{\geq 0}$).

We can also define $k$-abelian versions of these functions.
The \emph{$k$-abelian growth function} of a language $L$ is the function
\begin{equation*}
    \growth{L}^k: \Z_{\geq 0} \to \Z_{\geq 0},\ \growth{L}(n) = |(L \cap \Sigma^n) / \kabel{k}|,
\end{equation*}
where $(L \cap \Sigma^n) / \kabel{k}$
denotes the set of equivalence classes of elements of $L \cap \Sigma^n$.
The \emph{$k$-abelian complexity} of an infinite word $w$, denoted by $\growth{w}^k$,
is the $k$-abelian growth function of the set of factors of $w$.

An infinite word $w$ is \emph{ultimately periodic}
if there exist finite words $u, v$ such that $w = uv^\omega$.
An infinite word is \emph{aperiodic} if it is not ultimately periodic.
It was proved by Morse and Hedlund~\cite{mohe38}
that if $w$ is ultimately periodic, then $\growth{w}(n) = O(1)$,
and if $w$ is aperiodic, then $\growth{w}(n) \geq n + 1$ for all $n$.

\section{Separating sets of factors}

A language $X$ is a \emph{separating set of factors} (SSF) of a language $L$
if for all distinct words $u, v \in L$, there exists $x \in X$ such that
\begin{math}
    |u|_x \ne |v|_x.
\end{math}
The set $X$ is \emph{size-minimal}
if no set of smaller cardinality is an SSF of $L$,
and it is \emph{inclusion-minimal}
if $X$ does not have a proper subset that is an SSF of $L$.

\begin{example}
    Let $\Sigma = \{a, b\}$.
    The language $a^*$
    has two inclusion-minimal SSFs:
    $\{\eps\}$ and $\{a\}$.
    Both of them are also size-minimal.
    The language $\Sigma^2 = \{aa, ab, ba, bb\}$
    has eight inclusion-minimal SSFs:
    \begin{equation*}
        \{a, ab\}, \{a, ba\}, \{b, ab\}, \{b, ba\},
        \{aa, ab, ba\}, \{aa, ab, bb\}, \{aa, ba, bb\}, \{ab, ba, bb\}.
    \end{equation*}
    The first four are size-minimal.
\end{example}

\begin{example}
    Let $\Sigma = \{a, b, c, d, e, f\}$.
    The language $L = \{ac, ad, be, bf\}$ has a size-minimal SSF $\{a, c, e\}$.
    In terms of the guessing game mentioned in the introduction,
    this means that if Alice has chosen $w \in L$,
    then Bob can ask for the numbers $|w|_a, |w|_c, |w|_e$, and this will always reveal $w$.
    Actually, two questions are enough
    if Bob can choose the second question after hearing the answer to the first one:
    He can first ask for $|w|_a$,
    and then for either $|w|_c$ or $|w|_e$ depending on whether $|w|_a = 1$ or $|w|_a = 0$.
\end{example}

The following lemma contains some very basic results related to the above defitions.
In particular, it proves that every language has an inclusion-minimal SSF,
and all SSFs are completely characterized by the inclusion-minimal ones.

\begin{lemma} \label{lem:minimal}
    Let $L$ and $X$ be languages.
    \begin{enumerate}
        \item
        If $L \ne \varnothing$,
        then $L$ has a proper subset that is an SSF of $L$.
        \item
        If $X$ is an SSF of $L$ and $K \subseteq L$, then $X$ is an SSF of $K$.
        \item
        If $X$ is an SSF of $L$ and $X \subseteq Y$, then $Y$ is an SSF of $L$.
        \item
        If $X$ is an SSF of $L$,
        then $X$ has a subset that is an inclusion-minimal SSF of $L$.
    \end{enumerate}
\end{lemma}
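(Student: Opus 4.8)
The plan is to dispose of the two monotonicity statements (parts~2 and~3) immediately, reduce part~1 to a single observation about the longer of two words, and spend the real effort on part~4.

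Parts~2 and~3 follow directly from unwinding the definition: if $X$ separates every distinct pair in $L$, then it separates every distinct pair in any subset $K \subseteq L$, since such a pair is also a pair in $L$, giving part~2; and enlarging the separating set to any $Y \supseteq X$ only adds potential witnesses, giving part~3. For part~1, the key observation is that for distinct words $u, v$ with $|u| \ge |v|$, the word $u$ itself separates them: we have $|u|_u \ge 1$, while $|v|_u = 0$ because, as $|u| \ge |v|$, the word $u$ can be a factor of $v$ only if $u = v$. I would then take a word $w \in L$ of minimal length and argue that $L \setminus \{w\}$ is still an SSF of $L$: given distinct $u, v \in L$ with $|u| \ge |v|$, if $|u| > |v|$ then $u$ separates them and $u \ne w$ (as $|u| > |v| \ge |w|$), while if $|u| = |v|$ then both $u$ and $v$ separate the pair and at most one of them equals $w$. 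Since $w \in L$, this exhibits a proper subset; the degenerate case $|L| = 1$ is covered as well, because then $L \setminus \{w\} = \varnothing$ is vacuously an SSF.

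The substance is part~4, and the clean way to organize it is through the finite \emph{separator sets}. For distinct $u, v \in L$ put $S(u,v) = \{x : |u|_x \ne |v|_x\}$; then $X$ is an SSF of $L$ exactly when $X \cap S(u,v) \ne \varnothing$ for every such pair, so we are looking for a minimal (under inclusion) subset of $X$ meeting all the $S(u,v)$. The decisive point, which I expect to be the crux, is that each $S(u,v)$ is \emph{finite}: a nonempty separating word must be a factor of $u$ or of $v$, since otherwise both counts vanish, and $u$ and $v$ have only finitely many factors, so $S(u,v) \subseteq \{\eps\} \cup \{\text{factors of } u\} \cup \{\text{factors of } v\}$. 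I would then apply Zorn's lemma to the family of SSF-subsets of $X$ ordered by inclusion, seeking a minimal element. The only nontrivial hypothesis is that the intersection $Y$ of a chain $\mathcal{C}$ of SSF-subsets is again an SSF, and here finiteness is exactly what rescues the naive argument, which fails for unbounded separator sets. For a fixed pair, the sets $\{C \cap S(u,v) : C \in \mathcal{C}\}$ form a chain of nonempty subsets of the finite set $S(u,v)$, hence have a smallest member; that member is nonempty and contained in $Y \cap S(u,v)$, so $Y$ meets every $S(u,v)$ and is an SSF. Zorn's lemma then yields an inclusion-minimal SSF inside $X$. The main obstacle is precisely recognizing that this chain-intersection step is where the infinite case could break, and that the finiteness of the $S(u,v)$ is the property that keeps it intact.
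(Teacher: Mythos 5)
Your proposal is correct and follows essentially the same route as the paper: part~1 via a minimal-length word and the observation that the longer word of a distinct pair separates it, parts~2 and~3 from the definition, and part~4 via Zorn's lemma, where the chain-intersection step is saved by the finiteness of the set of possible separators for a fixed pair (they must be factors of $u$ or $v$). Your write-up just makes the finite separator sets $S(u,v)$ and the chain argument on them slightly more explicit than the paper does.
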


\begin{proof}
    To prove the first claim,
    let $w \in L$ be of minimal length and let $X = L \smallsetminus \{w\}$.
    Let $u, v \in L$ and $u \ne v$.
    By symmetry, we can assume that $|u| \leq |v|$ and $v \ne w$.
    Then $v \in X$ and $|u|_v = 0 \ne 1 = |v|_v$.
    This shows that $X$ is an SSF of $L$.

    The second and third claims follow directly from the definition of an SSF.

    The fourth claim is easy to prove if $X$ is finite.
    In the general case, it can be proved by Zorn's lemma as follows.
    Consider the partially ordered (by inclusion) family of all subsets of $X$ that are SSFs of $L$.
    The family contains at least $X$, so it is nonempty.
    By Zorn's lemma, if every nonempty chain
    (that is, a totally ordered subset of the family)
    $C$ has a lower bound in this family,
    then the family has a minimal element, which is then an inclusion-minimal SSF of $L$.
    We show that the intersection $I$ of the sets in $C$ is an SSF of $L$,
    and therefore it is the required lower bound.
    For any $u, v \in L$ such that $u \ne v$ and for any $Y \in C$,
    there exists $y \in Y$ such that $|u|_y \ne |v|_y$.
    Then $y$ must be a factor of $u$ or $v$,
    so if $u$ and $v$ are fixed, then there are only finitely many possibilities for $y$.
    Thus at least one of the words $y$ is in all sets $Y$ and therefore also in $I$.
    This shows that $I$ is an SSF of $L$.
    This completes the proof.
\end{proof}

The next lemma shows a connection between SSFs and $k$-abelian equivalence.

\begin{lemma} \label{lem:kabel}
    Let $L$ be a language.
    \begin{enumerate}
        \item
        Let $k \in \Z_+$.
        The language $\Sigma^{\leq k}$ is an SSF of $L$
        if and only if the words in $L$ are pairwise $k$-abelian nonequivalent.
        \item
        The language $L$ has a finite SSF
        if and only if there exists a number $k$ such that
        the words in $L$ are pairwise $k$-abelian nonequivalent.
    \end{enumerate}
\end{lemma}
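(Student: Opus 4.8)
The plan is to observe that the first claim is essentially a restatement of definitions, and then to bootstrap the second claim from the first using the finiteness of the alphabet together with the monotonicity of SSFs established in Lemma~\ref{lem:minimal}.

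First I would prove claim~1 by directly comparing the two conditions. Unfolding the definition of an SSF, the assertion that $\Sigma^{\leq k}$ is an SSF of $L$ reads: for all distinct $u, v \in L$ there exists $x \in \Sigma^{\leq k}$ with $|u|_x \ne |v|_x$. Unfolding the definition of $k$-abelian equivalence, the assertion that $u$ and $v$ are $k$-abelian nonequivalent means precisely that such an $x$ exists. Hence the two conditions are logically identical once quantified over all distinct pairs in $L$, and claim~1 requires no further argument.

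Next I would derive claim~2 in both directions from claim~1. For the backward direction, if the words in $L$ are pairwise $k$-abelian nonequivalent for some $k$, then claim~1 gives that $\Sigma^{\leq k}$ is an SSF of $L$; since $\Sigma$ is finite, $\Sigma^{\leq k}$ is a finite language, so $L$ has a finite SSF. For the forward direction, suppose $L$ has a finite SSF $X$, and let $k$ be the maximal length of a word in $X$ (taking, say, $k = 1$ in the degenerate case $X = \varnothing$, which forces $|L| \leq 1$ and makes the conclusion vacuous). Then $X \subseteq \Sigma^{\leq k}$, so Lemma~\ref{lem:minimal} (the monotonicity of SSFs under supersets) shows that $\Sigma^{\leq k}$ is itself an SSF of $L$, and claim~1 then yields that the words in $L$ are pairwise $k$-abelian nonequivalent.

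The argument is routine and I expect no substantial obstacle. The only points deserving minor care are the edge case of an empty SSF noted above, and the tacit but standard assumption that the alphabet $\Sigma$ is finite, which is exactly what guarantees that $\Sigma^{\leq k}$ is a finite set and thus makes the backward direction meaningful.
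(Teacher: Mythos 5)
Your proposal is correct and follows essentially the same route as the paper: claim~1 is an unfolding of definitions, the backward direction of claim~2 uses $\Sigma^{\leq k}$ being finite, and the forward direction embeds the finite SSF $X$ into some $\Sigma^{\leq k}$ and invokes monotonicity under supersets. Your extra attention to the edge case $X = \varnothing$ (and, implicitly, the need to take $k \geq 1$ when $X \subseteq \{\eps\}$) is a harmless refinement the paper omits.
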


\begin{proof}
    The first claim follows directly from the definitions of an SSF and $k$-abelian equivalence.
    The ``only if'' and ``if'' directions of the second claim can be proved as follows:
    If a finite set $X$ is an SSF of $L$,
    then $X \subseteq \Sigma^{\leq k}$ for some $k$,
    and then the words in $L$ are pairwise $k$-abelian nonequivalent.
    Conversely, if the words in $L$ are pairwise $k$-abelian nonequivalent,
    then $\Sigma^{\leq k}$ is an SSF of $L$.
\end{proof}

Note that the condition ``the words in $L$ are pairwise $k$-abelian nonequivalent''
can be equivalently expressed as ``$\growth{L} = \growth{L}^k$''.
This means that Lemma~\ref{lem:kabel}
proves the equivalence of Questions~\ref{que:ssf} and~\ref{que:growth}.

\begin{example}
    Let $w, x, y, z \in \{a, b\}^*$ and $L = \{awa, axb, bya, bzb\}$.
    No two words in $L$ have both a common prefix and a common suffix of length one,
    so the words are pairwise 2-abelian nonequivalent.
    By the first claim of Lemma~\ref{lem:kabel}, $\{a, b\}^{\leq 2}$ is an SSF of $L$.
    This SSF is not size-minimal
    (by the first claim of Lemma~\ref{lem:minimal}, $L$ has an SSF of size three),
    but it has the advantage of consisting of very short words and not depending on $w, x, y, z$.
    Actually, also $\{\eps, a, aa, ab, ba\}$ is an SSF of $L$.
    This follows from the fact that
    $|u|_b = |u|_\eps - |u|_a - 1$ and $|u|_{bb} = |u|_\eps - |u|_{aa} - |u|_{ab} - |u|_{ba} - 2$
    for all $u \in \{a, b\}^*$.
\end{example}

\begin{example}
    In a list of about 140000 English words
    (found in the SCOWL database~\footnote{http://wordlist.aspell.net/}),
    there are no 4-abelian equivalent words.
    Therefore, by Lemma~\ref{lem:kabel},
    $\Sigma^{\leq 4}$ is an SSF of the language formed by these words
    (the alphabet $\Sigma$ here contains the 26 letters from $a$ to $z$
    and also many accented letters and other symbols).
    The only pairs of 3-abelian equivalent words are
    $reregister, registerer$ and $reregisters, registerers$.
    The number of other pairs of 2-abelian equivalent words is also small enough
    that they can be listed here:
    \begin{align*}
        &indenter, intender&
        &indenters, intenders\\
        &pathophysiologic, physiopathologic&
        &pathophysiological, physiopathological\\
        &pathophysiology, physiopathology&
        &pathophysiologies, physiopathologies\\
        &tamara, tarama&
        &tamaras, taramas\\
        &tantarara, tarantara&
        &tantararas, tarantaras\\
        &tantaras, tarantas
    \end{align*}
    This means that most words of length 4 and 3 are not needed in the SSF.
    For example,
    the set $\Sigma^{\leq 2} \cup \{rere, hop, ind, tan, tar\}$ is an SSF of the language.
    We did not try to find a minimal SSF.
\end{example}

In the next lemma,
we consider whether the properties of having or not having a finite SSF
are preserved under the rational operations union, concatenation and Kleene star.

\begin{lemma}
    Let $K$ and $L$ be languages.
    \begin{enumerate}
        \item
        If $L$ has a finite SSF and $F$ is a finite language,
        then $L \cup F$ has a finite SSF.
        \item
        If $L$ does not have a finite SSF,
        then $L \cup K$ does not have a finite SSF.
        \item
        If $L$ has a finite SSF and $w$ is a word,
        then $wL$ and $Lw$ have finite SSFs.
        \item
        If $L$ does not have a finite SSF and $K \ne \varnothing$,
        then neither $KL$ nor $LK$ have finite SSFs.
        \item
        $L^*$ has a finite SSF
        if and only if
        there exists a word $w$ such that $L \subseteq w^*$.
        \item
        If the symmetric difference of $K$ and $L$ is finite,
        then either both or neither have a finite SSF.
    \end{enumerate}
\end{lemma}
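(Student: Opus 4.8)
The plan is to reduce every part to the equivalence in Lemma~\ref{lem:kabel} (a language has a finite SSF if and only if its words are pairwise $k$-abelian nonequivalent for some uniform $k$), together with the monotonicity statements in Lemma~\ref{lem:minimal} and the fact that $\kabel{k}$ is a congruence. Parts~2 and~4 then become short. For part~2, $L \subseteq L \cup K$, so by Lemma~\ref{lem:minimal} any SSF of $L \cup K$ is an SSF of $L$, and a finite SSF of $L \cup K$ would give one for $L$. For part~4, pick $w$ in the nonempty $K$; since $L$ has no finite SSF, for every $k$ there are distinct $u, u' \in L$ with $u \kabel{k} u'$, and because $\kabel{k}$ is a congruence, $wu \kabel{k} wu'$ with $wu \ne wu'$, so $wL$ has no finite SSF, and therefore neither does its superset $KL$ by Lemma~\ref{lem:minimal}; the case $LK$ is symmetric. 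For part~6 I would assume by symmetry that $L$ has a finite SSF, note that $K \cap L \subseteq L$ inherits one by Lemma~\ref{lem:minimal}, and then obtain a finite SSF for $K = (K \cap L) \cup (K \smallsetminus L)$ from part~1, since $K \smallsetminus L$ is finite.

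For part~1 I would work directly with $k$-abelian equivalence. Since $\kabel{k}$ preserves length and refines as $k$ grows, and since distinct words of length at most $2k-1$ are $k$-abelian nonequivalent, I would take $k$ larger than both the threshold furnished by Lemma~\ref{lem:kabel} for $L$ and the maximal length of a word in $F$. A case analysis (both words in $L$; unequal lengths; or equal lengths with a short word from $F$) shows $L \cup F$ is pairwise $k$-abelian nonequivalent, so Lemma~\ref{lem:kabel} applies. The easy direction of part~5 is similar: if $L \subseteq w^*$ then $L^* \subseteq w^*$, and $\{\eps\}$ separates the powers of $w$ by length, so it is a finite SSF of $w^*$ and hence of $L^*$ by Lemma~\ref{lem:minimal}.

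The substance of part~3 is a cancellation property of $\kabel{k}$: if $wu \kabel{k + |w|} wu'$ then $u \kabel{k} u'$. To prove it I would first get $|u| = |u'|$ and a common prefix of length $k-1$ from the prefix of length $k+|w|-1$ shared by $wu$ and $wu'$. Then, for a factor $x$ of length $k$, I would split the occurrences of $x$ in $wu$ into those starting inside $u$ (these are exactly the occurrences of $x$ in $u$) and those starting inside $w$ (these depend only on $w$ and the common prefix of length $k-1$ of $u$, hence agree for $u$ and $u'$); subtracting gives $|u|_x = |u'|_x$. Granting this, if $L$ is pairwise $k$-abelian nonequivalent then $wL$ is pairwise $(k+|w|)$-abelian nonequivalent, and symmetrically for $Lw$ using suffixes, so Lemma~\ref{lem:kabel} finishes part~3.

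The main obstacle is the hard direction of part~5: if no $w$ satisfies $L \subseteq w^*$, then two elements $a, b \in L$ fail to commute, and I must show $L^*$ has no finite SSF. Since $\{a^i b a^j : i, j \ge 0\} \subseteq L^*$, by Lemma~\ref{lem:minimal} it suffices to show this set has no finite SSF, i.e. (Lemma~\ref{lem:kabel}) to produce $k$-abelian equivalent distinct pairs for every $k$. The key computation is that for fixed factor length and for $i, j$ large, $|a^i b a^j|_x$ is affine in $i$ and in $j$ with the same slope (occurrences of $x$ per extra copy of the periodic block $a$), the boundary contribution near $b$ being a constant once both $a$-runs are longer than the factor; hence $|a^i b a^j|_x$ depends only on $i + j$. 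Consequently $a^i b a^j$ and $a^{i+1} b a^{j-1}$ are $k$-abelian equivalent for $i, j$ large, and they are distinct because $a^i b a^j = a^{i+1} b a^{j-1}$ would force $ab = ba$. Making $i, j$ large in terms of $k$ gives the required pairs for every $k$. Formalizing the ``affine with equal slopes'' claim, via the periodicity of the root of $a$, is the step I expect to require the most care.
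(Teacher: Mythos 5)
Your proposal is correct and follows essentially the same route as the paper: parts 2, 4 and 6 are the identical reductions, part 3 is the same cancellation argument (the paper writes it as $|u|_x = |wu|_x - |wp|_x$ for the common prefix $p$ of length $k-1$), and for part 5 the paper uses the same pair of words $u^k v u^{k-1}$ and $u^{k-1} v u^k$, justifying their $k$-abelian equivalence by the splitting identity $|uvw|_x = |uv|_x + |vw|_x$ for $|v| = k-1$, which is a cleaner packaging of your ``affine with equal slopes'' computation. The only other cosmetic difference is in part 1, where the paper exhibits the explicit SSF $X \cup F \cup \{\eps\}$ instead of passing through Lemma~\ref{lem:kabel}.
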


\begin{proof}
    \begin{enumerate}
        \item
        Let $X$ be a finite SSF of $L$.
        Let $u, v \in L \cup F$ and $u \ne v$.
        First, if $u, v \in L$, then $|u|_x \ne |v|_x$ for some $x \in X$.
        Second, if $u \in F$ and $|u| = |v|$, then $|u|_u \ne |v|_u$.
        Finally, if $|u| \ne |v|$, then $|u|_\eps \ne |v|_\eps$.
        Thus $X \cup F \cup \{\eps\}$ is an SSF of $L \cup F$.
        \item
        If a finite set is an SSF of $L \cup K$, then it is also an SSF of $L$.
        \item
        Let $wL$ have no finite SSF.
        Let $k \in \Z_+$ and $k' = k + |w|$.
        By Lemma~\ref{lem:kabel},
        there exist two $k'$-abelian equivalent words $wu, wv \in wL$.
        Then $u$ and $v$ have a common prefix $p$ of length $k - 1$.
        For all $x \in \Sigma^k$,
        \begin{equation*}
            |u|_x = |wu|_x - |wp|_x = |wv|_x - |wp|_x = |v|_x,
        \end{equation*}
        so $u \kabel{k} v$.
        We have shown that for all $k \geq 1$,
        there exist two $k$-abelian equivalent words in $L$.
        By Lemma~\ref{lem:kabel}, $L$ does not have a finite SSF.
        The case of $Lw$ is symmetric.
        \item
        Let $L$ have no finite SSF and let $w \in K$.
        Let $k \in \Z_+$.
        By Lemma~\ref{lem:kabel},
        there exist two $k$-abelian equivalent words $u, v \in L$,
        and then $wu, wv \in KL$ are $k$-abelian equivalent.
        We have shown that for all $k \geq 1$,
        there exist two $k$-abelian equivalent words in $KL$.
        By Lemma~\ref{lem:kabel}, $KL$ does not have a finite SSF.
        The case of $LK$ is symmetric.
        \item
        If $L \subseteq w^*$, then $\{w\}$ is an SSF of $L$.
        If there does not exist $w$ such that $L \subseteq w^*$,
        then there exist $u, v \in L$ such that $uv \ne vu$.
        For all $k \in \Z_+$, the words $u^k v u^{k - 1}, u^{k - 1} v u^k \in L^*$ are distinct.
        They have the same prefix of length $k - 1$.
        If $u_1$ is the prefix and $u_2$ is the suffix of $u^{k - 1}$ of length $k - 1$, then
        \begin{equation*}
            |u^k v u^{k - 1}|_x
            = |u^k|_x + |u_2 v u_1|_x + |u^{k - 1}|_x
            = |u^{k - 1}|_x + |u_2 v u_1|_x + |u^k|_x
            = |u^{k - 1} v u^k|_x
        \end{equation*}
        for all $x \in \Sigma^k$, so $u^k v u^{k - 1} \kabel{k} u^{k - 1} v u^k$.
        We have shown that for all $k \geq 1$,
        there exist two $k$-abelian equivalent words in $L^*$.
        By Lemma~\ref{lem:kabel}, $L^*$ does not have a finite SSF.
        \item
        If $K$ has a finite SSF, then so does $K \cap L$.
        If $L \smallsetminus K$ is finite,
        then also $L$ has a finite SSF by the first claim of this lemma.
        Similarly, if $L$ has a finite SSF and $K \smallsetminus L$ is finite,
        then also $K$ has a finite SSF.\qedhere
    \end{enumerate}
\end{proof}

\begin{example}
    We give an example showing that
    the property of having a finite SSF is not always preserved by union and concatenation.
    Let
    \begin{math}
        L = \{a^k b a^{k - 1} \mid k \in \Z_+\}.
    \end{math}
    Then both $L$ and $Laa$ have the finite SSF $\{\eps\}$.
    On the other hand,
    $L \{\eps, aa\} = L \cup Laa$ contains the $k$-abelian equivalent words
    $a^k b a^{k - 1}$ and $a^{k - 1} b a^k$ for all $k \geq 2$,
    so by Lemma~\ref{lem:kabel},
    $L \cup Laa$ does not have a finite SSF even though both $L$ and $Laa$ do have a finite SSF,
    and $L \{\eps, aa\}$ does not have a finite SSF
    even though both $L$ and $\{\eps, aa\}$ do have a finite SSF.
\end{example}

\section{Infinite words}

In this section, we give an answer to Question~\ref{que:compl}.

\begin{theorem} \label{thm:infinitewords}
    Let $w$ be an infinite word.
    There exists $k \in \Z_+$ such that $\growth{w} = \growth{w}^k$
    if and only if
    $w$ is ultimately periodic.
\end{theorem}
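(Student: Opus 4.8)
We first translate the statement into the language of the previous section. By the remark following Lemma~\ref{lem:kabel}, the condition $\growth{w} = \growth{w}^k$ holds if and only if the factors of $w$ are pairwise $k$-abelian nonequivalent, that is, if and only if $\Sigma^{\leq k}$ is an SSF of the set of factors of $w$. So the plan is to prove that the set of factors of $w$ admits a finite SSF precisely when $w$ is ultimately periodic; it then suffices to exhibit a single working $k$ in one direction and to rule out every $k$ in the other.

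For the ``if'' direction I would write $w = u v^\omega$ and, after replacing $v$ by its primitive root and enlarging $u$, assume $v$ is primitive; set $m = |u|$, $p = |v|$ and $k = m + p + 1$. The claim is that no two distinct factors of the same length are $k$-abelian equivalent. Let $s$ and $t$ be such factors, occurring at positions $i$ and $j$. If their common length is at most $2k - 1$, then $s = t$, because $k$-abelian equivalent words that short are equal. Otherwise $s$ and $t$ share a suffix of length $k - 1 = m + p$; since a suffix this long lies entirely inside the periodic part $v^\omega$ and has length at least $p$, the primitivity of $v$ forces the two occurrences into the same phase, i.e.\ $i \equiv j \pmod p$. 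Using in addition that $s$ and $t$ share a prefix of length $m + p$, which already covers the non-periodic initial segment $u$, one checks letter by letter that $s = t$: on the periodic tail they agree because $i \equiv j \pmod p$, and on the initial segment they agree by the common prefix. This contradicts $s \neq t$, so $\Sigma^{\leq k}$ is an SSF and $\growth{w} = \growth{w}^k$.

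For the ``only if'' direction I would argue by contraposition: assuming $w$ aperiodic, produce for every $k$ two distinct $k$-abelian equivalent factors. The core tool I would use is a construction that automatically keeps both words inside $w$. Suppose $w$ has a factor $g = \pi^a u \pi^b$ with $\pi$ primitive, with $a,b$ large (depending on $k$ and $|\pi|$), and with $\pi u \neq u \pi$. Take the two windows of $g$ of length $|g| - |\pi|$ starting at offsets $0$ and $|\pi|$, namely
\begin{equation*}
    X = \pi^{a} u \pi^{b - 1}, \qquad Y = \pi^{a - 1} u \pi^{b}.
\end{equation*}
Both $X$ and $Y$ are factors of $g$, hence of $w$; cancelling common powers of $\pi$ shows $X \neq Y$ exactly when $\pi u \neq u \pi$; and $X \kabel{k} Y$ because the extra copy of $\pi$ on the left of $X$ and the extra copy on the right of $Y$ contribute the very same multiset of length-$k$ factors of $\pi^\omega$, while the two junctions around $u$ and the common prefix of length $k-1$ are unchanged. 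What remains is to locate such a $g$: this amounts to finding two long runs of a common primitive period separated by a block that does not commute with the period, i.e.\ exactly the obstruction $x w^* y w^* z$ of the regular-language result.

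The main obstacle will be that an aperiodic word need not contain high powers at all, the square-free words being the extreme case. For such words the plan is instead to fix a recurrent factor $z$ of length $k-1$ and transpose two of its return blocks: from three occurrences of $z$ one reads a genuine factor $r_0 r_1 z$, its transpose $r_1 r_0 z$ is again $k$-abelian equivalent and is distinct whenever $r_0$ and $r_1$ do not commute. The step I expect to be the crux is guaranteeing that the transposed word is \emph{itself} a factor of $w$, i.e.\ that this pair of return blocks also occurs somewhere in the opposite order. Here aperiodicity is what must be exploited: a power-free word cannot repeat a return block consecutively, and a consistent orientation in which every pair of distinct blocks occurred in only one order would force the sequence of return blocks to terminate or to become ultimately periodic, making $w$ ultimately periodic. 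The degenerate situation in which the relevant blocks are all powers of a single primitive word again makes $w$ ultimately periodic, and this is where the primitivity facts (Lemmas~\ref{lem:overlap-lyndon} and~\ref{lem:overlap-maxocc}) would be used to control the runs.
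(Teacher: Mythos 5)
Your ``if'' direction is essentially the paper's own argument (the same choice $k = |uv| + 1$, with the verification done via phases rather than via the conjugate $v'$ and its continuation), and it is correct. The gap is in the ``only if'' direction, and it is exactly at the point you yourself flag as the crux. Your two-window trick $X = \pi^a u \pi^{b-1}$, $Y = \pi^{a-1} u \pi^b$ works, but only when $w$ actually contains $\pi^a u \pi^b$ with $a, b$ large relative to $k$, and an aperiodic word need not contain such factors for every $k$. In the general case you fall back on transposing two return blocks of a recurrent factor $z$ of length $k-1$, and the step ``the transposed word is again a factor of $w$'' is not proved: the assertion that a consistent orientation on pairs of return blocks would force the derived sequence to terminate or become ultimately periodic is not justified, and it is not true as stated --- if $w$ is not linearly recurrent the return blocks of $z$ can be pairwise distinct, so that every pair of blocks occurs adjacently in only one order while the derived sequence is neither finite nor periodic. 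One would then have to argue that some \emph{other} choice of $z$ works, and also dispose of the degeneracies where the pairs that do occur in both orders happen to commute; none of this is routine.

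The paper's proof removes the obstacle entirely by making both candidate words sub-factors of a \emph{single} factor of $w$, so that nothing has to ``occur somewhere else in the opposite order.'' By pigeonhole, some word of length $(k-1)(\growth{w}(k-1) + 1)$ occurs infinitely often in $w$; cutting it into blocks of length $k-1$ yields a repeated block $x$, hence a factor $xyx$ with $|x| = k - 1$ occurring infinitely often. Writing $w = z_0\, xyx\, z_1\, xyx\, z_2 \dotsm$, the word $xyx z_i xyx$ is a factor of $w$ for each $i$, and its prefix $xyx z_i x$ and its suffix $x z_i xyx$ have the same length, so both are factors of $w$; they are $k$-abelian equivalent by the junction identity $|xyx z_i x|_t = |xyx|_t + |x z_i x|_t = |x z_i xyx|_t$ for $t \in \Sigma^k$, and aperiodicity forces some $i$ for which $xy$ and $x z_i$ have different primitive roots, whence $xyx z_i x \ne x z_i xyx$. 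This is precisely your two-window trick, but applied with the ``pseudo-period'' $xy$ of $xyx z_i xyx$ in place of a genuine power $\pi$; it needs no case split between high-power and power-free words and no analysis of return words. To repair your proposal you would either need to prove the missing claim about return blocks or, more simply, adopt this prefix/suffix observation.
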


\begin{proof}
    First, let $w$ be ultimately periodic.
    Then we can write $w = u v^\omega$,
    where $v$ is primitive and $v$ is not a suffix of $u$.
    Let $k = |uv| + 1$ and let $x, y$ be $k$-abelian equivalent factors of $w$.
    If $x$ and $y$ are shorter than $uv$, then $x = y$.
    Otherwise $x$ and $y$ have a common prefix of length $k - 1 = |uv|$
    and we can write $x = u' v' x'$ and $y = u' v' y'$, where $|u'| = |u|$ and $|v'| = |v|$.
    Here $v'$ is a factor of $v^\omega$, so it must be a conjugate of $v$,
    and it is followed by a $(v')^\omega$.
    Thus $x'$ and $y'$ are prefixes of $(v')^\omega$ and they are of the same length,
    so $x' = y'$ and thus $x = y$.
    We have proved that no two factors of $w$ are $k$-abelian equivalent.
    It follows that $\growth{w} = \growth{w}^k$.

    Second, let $w$ be aperiodic and let $k \geq 2$ be arbitrary.
    Let $n = \growth{w}(k - 1) + 1$.
    There must exist a word $u$ of length $(k - 1) n$
    that occurs infinitely many times in $w$ as a factor.
    We can write $u = x_1 \dotsm x_n$, where $x_1, \dots, x_n \in \Sigma^{k - 1}$.
    By the definition of $n$,
    there exist two indices $i, j \in \{1, \dots, n\}$ such that $x_i = x_j$.
    Let $i < j$, $x = x_i = x_j$ and $y = x_{i + 1} \dotsm x_{j - 1}$.
    Then $xyx$ is a factor of $u$ and thus occurs infinitely many times in $w$ as a factor.
    Therefore we can write $w = z_0 xyx z_1 xyx z_2 xyx \dotsm$
    for some infinite sequence of words $z_0, z_1, z_2, \dotsc$.
    If the words $xy$ and $x z_i$ have the same primitive root $p$ for all $i \in \Z_+$,
    then $w = z_0 p^\omega$, which contradicts the aperiodicity of $w$.
    Thus there exists $i$ such that $xy$ and $x z_i$ have a different primitive root.
    Then $xy x z_i \ne x z_i xy$ and thus $xyx z_i x \ne x z_i xyx$.
    On the other hand, $xyx z_i x$ and $x z_i xyx$ are $k$-abelian equivalent
    because they have the same prefix $x$ of length $k - 1$ and
    \begin{equation*}
        |xyx z_i x|_t = |xyx|_t + |x z_i x|_t = |x z_i x|_t + |xyx|_t = |x z_i xyx|_t
    \end{equation*}
    for all $t \in \Sigma^k$.
    Moreover, $xyx z_i x$ and $x z_i xyx$ are factors of $w$.
    It follows that $\growth{w} \ne \growth{w}^k$.
\end{proof}

\begin{corollary}
    The set of factors of an infinite word $w$ has a finite SSF
    if and only if
    $w$ is ultimately periodic.
\end{corollary}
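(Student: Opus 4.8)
The plan is to deduce this immediately from Theorem~\ref{thm:infinitewords} together with the second part of Lemma~\ref{lem:kabel}, so the bulk of the work is already done. Write $L$ for the set of factors of $w$. By the definitions of the growth and $k$-abelian growth functions we have $\growth{L} = \growth{w}$ and $\growth{L}^k = \growth{w}^k$ for every $k \in \Z_+$, so statements about $w$ and statements about $L$ translate directly into one another.

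First I would apply Lemma~\ref{lem:kabel}(2): the language $L$ has a finite SSF if and only if there exists a positive integer $k$ such that the words in $L$ are pairwise $k$-abelian nonequivalent. By the remark immediately following that lemma, the assertion that the words in $L$ are pairwise $k$-abelian nonequivalent is precisely the equality $\growth{L} = \growth{L}^k$, which by the identifications above is the same as $\growth{w} = \growth{w}^k$. Hence $L$ has a finite SSF if and only if there exists $k \in \Z_+$ with $\growth{w} = \growth{w}^k$.

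It then remains only to invoke Theorem~\ref{thm:infinitewords}, which states exactly that such a $k$ exists if and only if $w$ is ultimately periodic. Chaining the two equivalences gives that $L$ has a finite SSF if and only if $w$ is ultimately periodic, as claimed. There is no real obstacle here: the corollary is a formal consequence of the theorem, and the only point requiring any care is the bookkeeping step of translating between the SSF formulation of Lemma~\ref{lem:kabel} and the growth-function formulation of Theorem~\ref{thm:infinitewords}, which the remark after Lemma~\ref{lem:kabel} has already made explicit.
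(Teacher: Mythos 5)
Your proposal is correct and follows exactly the paper's route: the paper's proof is the one-line ``Follows from Theorem~\ref{thm:infinitewords} and Lemma~\ref{lem:kabel},'' and you have simply spelled out the translation between the SSF formulation and the growth-function formulation via the remark after Lemma~\ref{lem:kabel}. No issues.
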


\begin{proof}
    Follows from Theorem~\ref{thm:infinitewords} and Lemma~\ref{lem:kabel}.
\end{proof}

\section{Regular languages}

In this section, we give an answer to Question~\ref{que:ssf} for regular languages.

\begin{lemma} \label{lem:half-main}
    If a language $L$ has a subset of the form $x w^* y w^* z$
    for some words $w, x, y, z$ such that $wy \ne yw$,
    then $L$ does not have a finite SSF.
\end{lemma}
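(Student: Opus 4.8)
The plan is to reduce the statement to $k$-abelian equivalence via Lemma~\ref{lem:kabel}: it suffices to exhibit, for every $k \in \Z_+$, two distinct $k$-abelian equivalent words inside $x w^* y w^* z \subseteq L$. Following the pattern of the earlier argument for $L^*$, I would take $u = x w^k y w^k z$ and $u' = x w^{k+1} y w^{k-1} z$. These are distinct: cancelling the common prefix $x w^k$ and the common suffix $w^{k-1} z$ from the equation $u = u'$ leaves $yw = wy$, contradicting the hypothesis $wy \ne yw$ (note also $w \ne \eps$, as otherwise $wy = yw$). So the whole task is to prove $u \kabel{k} u'$.

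Since $u$ and $u'$ share the prefix $x w^k$, which has length at least $k-1$, the basic characterization of $k$-abelian equivalence reduces this to showing $|u|_t = |u'|_t$ for every $t \in \Sigma^k$. I would write both words as $P B S$ with the same outer blocks $P = x w^k$ and $S = w^{k-1} z$ and with the middle block $B$ equal to $yw$ for $u$ and to $wy$ for $u'$. Writing $\ell$ and $r$ for the length-$(k-1)$ suffix of $P$ and prefix of $S$, two applications of the additivity fact $|AmB|_t = |Am|_t + |mB|_t$ (valid when $|m| = k-1$ and $|t| = k$) give $|PBS|_t = |P|_t + |S|_t + |\ell B r|_t$. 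As $P, S, \ell, r$ are identical for $u$ and $u'$, the problem collapses to the single identity $|\ell\,(yw)\,r|_t = |\ell\,(wy)\,r|_t$.

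The point of the large exponents is that $\ell$ is then the length-$(k-1)$ suffix of $w^k$ and $r$ the length-$(k-1)$ prefix of $w^{k-1}$, so both are factors of $w^\omega$. Applying the same additivity device around $y$ to $\ell\,yw\,r = \ell \cdot y \cdot (wr)$ and to $\ell\,wy\,r = (\ell w) \cdot y \cdot r$, the two windows straddling $y$ turn out to be the \emph{same} word (both equal $\ell\,y\,r$, using periodicity of $w^\omega$ to see that the length-$(k-1)$ suffix of $\ell w$ is again $\ell$ and the length-$(k-1)$ prefix of $wr$ is again $r$), so the difference of the two sides reduces to $|wr|_t - |\ell w|_t$. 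Finally $wr$ and $\ell w$ are two factors of $w^\omega$ of the same length $|w| + k - 1$ whose length-$k$ windows occupy a full period of starting positions modulo $|w|$; since in $w^\omega$ the length-$k$ factor at a position depends only on that position modulo $|w|$, the two window multisets coincide and $|wr|_t = |\ell w|_t$. This yields $u \kabel{k} u'$.

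I expect this last equality — that sliding the surplus copy of $w$ from one side of $y$ to the other preserves every length-$k$ factor count — to be the crux of the proof, as it is precisely where the periodicity of $w^\omega$ (rather than any property of $x, y, z$) is used; the rest is bookkeeping with the additivity fact, and the whole construction is uniform in $k \geq 1$ (for $k = 1$ the buffers $\ell, r$ are empty and the claim is just abelian equivalence). Having produced such a pair $u \ne u'$ with $u \kabel{k} u'$ for every $k$, Lemma~\ref{lem:kabel} then shows that $L$ has no finite SSF.
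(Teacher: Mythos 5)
Your proof is correct, and it follows the same overall strategy as the paper: for each $k$, exhibit two distinct $k$-abelian equivalent words in $x w^* y w^* z$ and invoke Lemma~\ref{lem:kabel}. The difference lies in the witness pair and the resulting computation. The paper takes the symmetric pair $x w^k y w^{k-1} z$ and $x w^{k-1} y w^k z$; with $w_1$ and $w_2$ the length-$(k-1)$ prefix and suffix of $w^{k-1}$, the additivity fact decomposes both counts into the identical sum $|x w_1|_t + |w^k|_t + |w_2 y w_1|_t + |w^{k-1}|_t + |w_2 z|_t$, so equality of the two sides is immediate with no further argument. Your asymmetric pair $x w^k y w^k z$ and $x w^{k+1} y w^{k-1} z$ works as well, but the decomposition does not collapse into identical summands: you are left with the residual identity $|wr|_t = |\ell w|_t$, which you correctly settle by noting that both words are length-$(|w|+k-1)$ factors of $w^\omega$ whose length-$k$ windows sweep out a complete residue system of starting positions modulo $|w|$. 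That sliding-window step is sound (and it is exactly where you need $\ell$ to be a suffix and $r$ a prefix of a sufficiently long power of $w$), but the paper's choice of pair shows it can be avoided entirely. Everything else --- the distinctness check by cancellation (using $wy \ne yw$), the reduction to length-$k$ factors via the common prefix of length $k-1$, and the final appeal to Lemma~\ref{lem:kabel} --- matches the paper's argument.
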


\begin{proof}
    For all $k \in \Z_+$, the words $x w^k y w^{k - 1} z$ and $x w^{k - 1} y w^k z$ are distinct.
    They have the same prefix of length $k - 1$.
    If $w_1$ is the prefix and $w_2$ is the suffix of $w^{k - 1}$ of length $k - 1$, then
    \begin{equation*}
        |x w^k y w^{k - 1} z|_t
        = |x w_1|_t + |w^k|_t + |w_2 y w_1|_t + |w^{k - 1}|_t + |w_2 z|_t
        = |x w^{k - 1} y w^k z|_t
    \end{equation*}
    for all $t \in \Sigma^k$, so $x w^k y w^{k - 1} z \kabel{k} x w^{k - 1} y w^k z$.
    We have shown that for all $k \geq 1$,
    there exist two $k$-abelian equivalent words in $L$.
    By Lemma~\ref{lem:kabel}, $L$ does not have a finite SSF.
\end{proof}

A language $L$ is \emph{bounded} if it is a subset of a language of the form
\begin{equation*}
    v_1^* \dotsm v_n^*,
\end{equation*}
where $v_1, \dots, v_n$ are words.
It was proved by Ginsburg and Spanier~\cite{gisp66} that
a regular language is bounded if and only if it is a finite union of languages of the form
\begin{equation*}
    u_0 v_1^* u_1 \dotsm v_n^* u_n,
\end{equation*}
where $u_0, \dots, u_n$ are words and $v_1, \dots, v_n$ are nonempty words.

\begin{lemma} \label{lem:unbounded}
    Every regular language is bounded
    or has a subset of the form $x w^* y w^* z$
    for some words $w, x, y, z$ such that $wy \ne yw$.
\end{lemma}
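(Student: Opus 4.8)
The plan is to work with a \emph{trim} automaton $A$ for $L$, with state set $Q$, in which every state is reachable from the initial state and co-reachable to a final state, and to split into two cases according to the loop structure at the states. Call a word $\ell$ a \emph{loop label at} $q$ if some path from $q$ to $q$ reads $\ell$. In the first case, some state $q$ has two loop labels $w, y$ with $wy \ne yw$. Choosing a word $x$ that labels a path from the initial state to $q$ and a word $z$ that labels a path from $q$ to a final state, every word $x w^i y w^j z$ (with $i, j \ge 0$) is accepted, so $x w^* y w^* z \subseteq L$, and this is precisely the required subset (matching the hypothesis of Lemma~\ref{lem:half-main}). In the second case, at every state all loop labels pairwise commute, and the task is to prove that $L$ is then bounded.

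For the second case, the first step is to note that, since commutation is transitive on nonempty words (two nonempty words commute if and only if they have a common primitive root), the assumption means that for each state $q$ the loop labels at $q$ are exactly the powers of a single primitive word $p_q$. I would then propagate this through each strongly connected component $C$: fixing a base state $q_0$ of $C$ and conjugating a loop $\ell$ at any $q \in C$ by paths $\alpha\colon q_0 \to q$ and $\beta\colon q \to q_0$, both $\alpha\beta$ and $\alpha\ell\beta$ are loops at $q_0$, hence powers of $p_{q_0}$; comparing them shows that $\ell$ is a factor of $p_{q_0}^\omega$ of length divisible by $|p_{q_0}|$, that is, a power of a conjugate of $p_{q_0}$. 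Thus all loop labels inside $C$ are powers of conjugates of one primitive word $p_C$.

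The technical heart is then to bound the labels of walks that stay inside a single component. Fixing $C$ with root $p_C$, an entry state $s$ and an exit state $t$, I would show that the set of labels of walks from $s$ to $t$ that remain in $C$ is a bounded language. For any such walk longer than $|Q|$, every window of $|Q|+1$ consecutive states contains a repeated state and hence a short loop whose label is a power of a conjugate of $p_C$; overlapping these loops along the walk and applying the Fine--Wilf theorem (Theorem~\ref{thm:finewilf}, in the form of Lemma~\ref{lem:overlap-lyndon}) shows that the label, apart from a prefix and a suffix of bounded length, is periodic with period $|p_C|$ and aligned to $p_C^\omega$. Hence this set of labels is contained in a finite union of sets $a\, r^*\, b$ with $r$ a conjugate of $p_C$ and $a, b$ of bounded length, which is bounded.

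Finally, because the components of $A$ form a directed acyclic graph, an accepting path visits them in a fixed topological order and passes through each at most once; its label is therefore a concatenation of finitely many within-component segments (bounded languages, by the previous step) separated by single connecting edges. Since there are only finitely many admissible sequences of components, choices of entry and exit states, and connecting edges, $L$ is a finite union of concatenations of bounded languages, and such a union is again bounded; this settles the second case and completes the dichotomy. I expect the within-component bounding step to be the main obstacle, since the overlapping-loops argument and the bookkeeping of prefixes, suffixes and conjugate alignment through Fine--Wilf require the most care.
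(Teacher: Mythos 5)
Your route is genuinely different from the paper's: the paper argues by structural induction on a regular expression for $L$ (finite languages, union, concatenation, star), using the Ginsburg--Spanier characterization to keep unions bounded and extracting the non-commuting pair $w, y$ only in the star case, whereas you work directly on a trim automaton and its strongly connected components. Your case split is exhaustive, Case~1 is correct, and the overall architecture of Case~2 (one primitive root per component, bounded within-component label sets, DAG of components, finite unions and concatenations of bounded languages are bounded) does yield boundedness. However, the step you yourself flag as the technical heart has a real gap as described: the short loops you extract from consecutive windows of $|Q|+1$ states can sit anywhere inside their windows, so the periodic segments coming from two overlapping windows need not overlap each other as factors of the label (one may lie at the very start of its window and the next at the very end of the following window), and then Lemma~\ref{lem:overlap-lyndon} and Fine--Wilf cannot be used to chain them into a single global period. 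Excising the loops instead does not immediately help either, since loops may be removed at unboundedly many positions, which gives an interleaving rather than a language of the form $a\,r^*\,b$.

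The machinery you already set up in the propagation step closes this gap and makes Fine--Wilf unnecessary here. Fix the base state $q_0$ of $C$ and, for the entry and exit states $s, t$, fix paths $\gamma\colon q_0 \to s$ and $\eta\colon t \to q_0$ inside $C$. For any walk $\delta\colon s \to t$ inside $C$, the concatenation $\gamma\delta\eta$ is a loop at $q_0$, so its label is $p_C^{\,i}$ for some $i \geq 0$; hence the label of $\delta$ is the factor of $p_C^\omega$ starting at the fixed position $|\gamma|$ and of length congruent to $-|\gamma|-|\eta|$ modulo $|p_C|$. All such factors form the single language $(p')^* b$, where $p'$ is the conjugate of $p_C$ obtained by a cyclic shift of $|\gamma| \bmod |p_C|$ and $b$ is the prefix of $(p')^\omega$ of the appropriate length less than $|p_C|$. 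This gives the within-component bound in one line, after which the rest of your argument goes through. With that repair your proof is correct; it is longer than the paper's induction but has the merit of exhibiting the automaton-theoretic meaning of the dichotomy (a state with two non-commuting loops versus one primitive period per component).
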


\begin{proof}
    The proof is by induction.
    Every finite language is bounded.
    We assume that $A$ and $B$ are regular languages that have the claimed property
    and prove that also $A \cup B$, $AB$ and $A^*$ have the claimed property.

    First, we consider $A \cup B$.
    If both $A$ and $B$ are bounded,
    then so is $A \cup B$ by the characterization of Ginsburg and Spanier.
    If at least one of $A$ and $B$ has a subset of the form $x w^* y w^* z$
    for some words $w, x, y, z$ such that $wy \ne yw$,
    then $A \cup B$ has this same subset.

    Next, we consider $AB$.
    If both $A$ and $B$ are bounded or if one of them is empty,
    then $AB$ is bounded by the definition of bounded languages.
    If $A$ and $B$ are nonempty and at least one of them has a subset of the form $x w^* y w^* z$
    for some words $w, x, y, z$ such that $wy \ne yw$,
    then $AB$ has a subset of the same form with a different $x$ or $z$.

    Finally, we consider $A^*$.
    If $A \subseteq u^*$ for some word $u$, then $A^* \subseteq u^*$ is bounded.
    If $A$ is not a subset of $u^*$ for any word $u$,
    then there exist $w, y \in A$ such that $wy \ne yw$, and $A^*$ has $w^* y w^*$ as a subset.
\end{proof}

By Lemmas~\ref{lem:half-main} and~\ref{lem:unbounded},
if a regular language is not bounded, then it does not have a finite SSF.
Thus we can concentrate on bounded regular languages.
We continue with a technical lemma.

\begin{lemma} \label{lem:nkp}
    Let $L$ be a bounded regular language.
    There exist numbers $n, k \geq 0$ and a finite set of Lyndon words $P$
    such that the following are satisfied:
    \begin{enumerate}
        \item \label{cond:1}
        If $p, q \in P$, $p \ne q$, and $l, m \geq 0$,
        then $p^l$ and $q^m$ do not have a common factor of length $n$.
        \item \label{cond:2}
        If $u \in L$ and $p \in P$,
        then either there is at most one maximal $p^{\geq n}$-occurrence in $u$
        or $L$ has a subset of the form $x (p^m)^* y (p^m)^* z$,
        where $py \ne yp$ and $m \geq 1$.
        \item \label{cond:3}
        If $u \in L$ and $x$ is a factor of $u$ of length at least $k$,
        then $x$ has a factor $p^{n + 1}$ for some $p \in P$.
    \end{enumerate}
\end{lemma}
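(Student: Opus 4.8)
The plan is to reduce everything to the structure theorem of Ginsburg and Spanier, writing the bounded regular language as a finite union $L=\bigcup_{i}C_i$ of components $C_i=u_0 v_1^* u_1\cdots v_m^* u_m$ (suppressing the superscript $i$), where the $v_j$ are nonempty. I take $P$ to be the finite set of \emph{distinct} Lyndon roots of the finitely many words $v_j$ occurring in these components. The two parameters must be fixed in the right order: I first choose $n$ large enough for Conditions~\ref{cond:1} and~\ref{cond:2}, and only then $k$, which has to be taken large \emph{relative to} $n$ to obtain Condition~\ref{cond:3}. Throughout I use the basic identity that if a block $v=r^s$ has Lyndon root $p$, with $p=\alpha\beta$ and $r=\beta\alpha$, then $v^a=\beta\,p^{\,as-1}\,\alpha$ for $a\ge 1$; this is what turns a power of a block into an explicit power of $p$ flanked by the fixed words $\alpha,\beta$.

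Condition~\ref{cond:1} is the easy part: once $n\ge|pq|$ for all $p,q\in P$, Lemma~\ref{lem:overlap-lyndon} shows that a common factor of length $n$ of $p^l$ and $q^m$ (for $l,m\ge1$; the cases $l=0$ or $m=0$ are vacuous) would force $p$ and $q$ to share their Lyndon root, hence $p=q$ since a Lyndon word is its own Lyndon root. For Condition~\ref{cond:3} I plan a pigeonhole argument: a factor $x$ of $u\in C_i$ meets at most $m$ blocks together with the connecting words $u_0,\dots,u_m$ of bounded total length, so if $|x|\ge k$ with $k$ large, then $x$ meets a single block $v_j$ in a portion of length exceeding $(n+3)|v_j|$. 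Since that portion is a factor of $r_j^\omega$, and $p_j^\omega$ is a shift of $r_j^\omega$ where $p_j\in P$ is the Lyndon root of $v_j$, such a window contains $p_j^{\,n+1}$. Taking $k$ larger than the total connector length plus $m(n+3)$ times the maximal block length, maximised over the finitely many components, gives Condition~\ref{cond:3}.

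The main obstacle is Condition~\ref{cond:2}. By Lemma~\ref{lem:overlap-maxocc} a maximal $p^{\ge n}$-occurrence is a maximal $p$-run of length at least $n|p|$. The first step is to show, for $n$ large, that every such run passes through a block $v_j$ whose Lyndon root is $p$: inside a $p$-periodic region a connector contributes boundedly, and by Lemma~\ref{lem:overlap-lyndon} a block with a different Lyndon root can overlap the region only in a factor of length $<|pq|$, a bound \emph{independent of} $n$; hence for $n$ large the run overlaps some root-$p$ block in a portion of length $\ge|p|$, and by Fine--Wilf that block lies inside the run. Now suppose some $u\in L$ has two distinct maximal $p^{\ge n}$-occurrences. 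They are disjoint runs, so they pass through two distinct blocks $v_i,v_j$ with $i<j$ in the component of $u$. I fix all exponents of that component as in $u$ except those of $v_i$ and $v_j$, which I let range over the progressions $a_i+t_1(m/s_i)$ and $a_j+t_2(m/s_j)$ with $m=\operatorname{lcm}(s_i,s_j)$. Using the block identity, the resulting subset of $L$ is exactly $x(p^m)^* y(p^m)^* z$, with $x=A\beta_i p^{\,a_is_i-1}$, $y=\alpha_i B\beta_j p^{\,a_js_j-1}$, $z=\alpha_j C$, where $A,B,C$ are the fixed words surrounding the two blocks.

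It remains to verify $py\ne yp$, which is where maximality is essential. If instead $y\in p^*$, say $y=p^c$, then in the original word $u$ (the case $t_1=t_2=0$) the stretch $\beta_i p^{\,a_is_i-1}\,y\,\alpha_j$ joining the two blocks equals $\beta_i p^{\,a_is_i-1+c}\alpha_j$, which is a single $p$-periodic word because $\beta_i$ and $\alpha_j$ are a suffix and a prefix of $p$; thus the $p$-period never breaks between the two blocks and the two runs lie in one and the same maximal $p$-run, contradicting that the two maximal $p^{\ge n}$-occurrences are distinct. Hence $y\notin p^*$, that is $py\ne yp$, and $L$ contains a subset of the required form. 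Finally I take $n$ and $k$ to be the maxima of the finitely many component-wise bounds, so that all three conditions hold at once. The genuinely delicate point, which I would write out most carefully, is the first step of Condition~\ref{cond:2}: that for large $n$ every long $p$-run is anchored to a root-$p$ block and that distinct runs use distinct blocks, since this is exactly what lets the two independent pumps coexist inside a single component.
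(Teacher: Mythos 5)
Your proposal is correct and follows essentially the same route as the paper: the Ginsburg--Spanier decomposition, $P$ as the Lyndon roots of the blocks, a length-counting/pigeonhole argument for Conditions~\ref{cond:1} and~\ref{cond:3}, and for Condition~\ref{cond:2} the anchoring of each long maximal $p$-run to a distinct root-$p$ block (via Lemmas~\ref{lem:overlap-lyndon} and~\ref{lem:overlap-maxocc}), followed by pumping the two blocks and deducing $py \ne yp$ from the maximality of the two runs. The only differences are cosmetic bookkeeping (your explicit conjugation $v^a = \beta p^{as-1}\alpha$ and lcm of the pump periods versus the paper's $v^{m_J} = p_1 p^M p_2$ and $m = l_1 l_2$).
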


\begin{proof}
    If $L$ is finite, then the claim is basically trivial.
    For example, we can let $P = \varnothing$ and $n = k = \max \{|w| \mid w \in L\} + 1$.
    If $L$ is infinite, then we can write
    \begin{equation*}
        L = \bigcup_{i = 1}^s u_{i 0} \prod_{j = 1}^{r_i} v_{i j}^* u_{i j},
    \end{equation*}
    where $s \geq 1$ and $r_1, \dots, r_s \geq 0$ are numbers,
    $r_i \geq 1$ for at least one $i$,
    all the $u_{i j}$ are words,
    and all the $v_{i j}$ are nonempty words.
    We are going to prove that the three conditions are satisfied if
    $P$ is the set of Lyndon roots of the words $v_{i j}$ and
    \begin{align*}
        n &= 2 \cdot \max \Big\{
            |u_{i 0} \prod_{j = 1}^{r_i} v_{i j} u_{i j}|
            \ \Big|\ i \in \{1, \dots, s\}
        \Big\},\\
        k &= \max \Big\{
            |u_{i 0} \prod_{j = 1}^{r_i} v_{i j}^{n + 2} u_{i j}|
            \ \Big|\ i \in \{1, \dots, s\}
        \Big\}.
    \end{align*}

    First, we prove Condition~\ref{cond:1}.
    If $p, q \in P$, $l, m \geq 0$, and $p^l$ and $q^m$ have a common factor of length $|pq|$,
    then $p = q$ by Lemma~\ref{lem:overlap-lyndon}.
    Clearly $n \geq |pq|$, so Condition~\ref{cond:1} is satisfied.

    Next, we prove Condition~\ref{cond:2}.
    This is the most complicated part of the proof.
    Let $u \in L$ and $p \in P$.
    There are numbers $i, m_1, \dots, m_{r_i}$ such that
    \begin{equation*}
        u = u_{i 0} \prod_{j = 1}^{r_i} v_{i j}^{m_j} u_{i j}.
    \end{equation*}
    Let $(w_1, p^N, w_2)$ be a maximal $p^{\geq n}$-occurrence in $u$.
    If there does not exist an index $J$ such that $(w_1, p^N, w_2)$ and the occurrence
    \begin{equation} \label{eq:vijocc}
        \Big( u_{i 0} \prod_{j = 1}^{J - 1} v_{i j}^{m_j} u_{i j},\
        v_{i J}^{m_J},\
        u_{i J} \prod_{j = J + 1}^{r_i} v_{i j}^{m_j} u_{i j} \Big)
    \end{equation}
    have an overlap of length at least $|p v_{i J}|$, then
    \begin{equation*}
        |p^N| < \sum_{j = 0}^{r_i} |u_{i j}| + \sum_{j = 1}^{r_i} |p v_{i j}|
        \leq \frac{n}{2} + r_i |p|
        \leq \frac{n}{2} + \frac{n}{2} \cdot |p|
        \leq n |p|,
    \end{equation*}
    which is a contradiction.
    So there exists a number $J$ such that $(w_1, p^N, w_2)$ and \eqref{eq:vijocc}
    have an overlap of length at least $|p v_{i J}|$,
    and then $p$ is the Lyndon root of $v_{i J}$ by Lemma~\ref{lem:overlap-lyndon}.
    We can write $v_{i J}^{m_J} = p_1 p^M p_2$,
    where $p_1$ is a proper suffix of $p$, $p_2$ is a proper prefix of $p$, and $M \geq 1$.
    Then the occurrences $(w_1, p^N, w_2)$ and
    \begin{equation} \label{eq:pmocc}
        \Big( u_{i 0} \prod_{j = 1}^{J - 1} v_{i j}^{m_j} u_{i j} \cdot p_1,\
        p^M,\
        p_2 u_{i J} \prod_{j = J + 1}^{r_i} v_{i j}^{m_j} u_{i j} \Big)
    \end{equation}
    have an overlap of length at least $|p|$,
    so \eqref{eq:pmocc} is contained in $(w_1, p^N, w_2)$ by Lemma~\ref{lem:overlap-maxocc}.
    If there is another maximal $p^{\geq n}$-occurrence $(w_1', p^{N'}, w_2')$ in $u$,
    then similarly there exists a number $J'$ such that $p$ is the Lyndon root of $v_{i J'}$,
    $v_{i J'}^{m_J'} = p_1' p^{M'} p_2'$,
    where $p_1'$ is a proper suffix of $p$, $p_2'$ is a proper prefix of $p$, and $M' \geq 1$,
    and the occurrence
    \begin{equation} \label{eq:pmpocc}
        \Big( u_{i 0} \prod_{j = 1}^{J' - 1} v_{i j}^{m_j} u_{i j} \cdot p_1',\
        p^{M'},\
        p_2' u_{i J'} \prod_{j = J' + 1}^{r_i} v_{i j}^{m_j} u_{i j} \Big)
    \end{equation}
    is contained in the occurrence $(w_1', p^{N'}, w_2')$.
    It must be $J \ne J'$,
    because otherwise \eqref{eq:pmocc} and \eqref{eq:pmpocc} would be the same,
    and then the maximal occurrences $(w_1, p^N, w_2)$ and $(w_1', p^{N'}, w_2')$ would be the same
    by Lemma~\ref{lem:overlap-maxocc}.
    By symmetry, we can assume $J < J'$.
    Then $L$ has a subset of the form $x (p^{l_1})^* y (p^{l_2})^* z$, where
    \begin{equation*}
        y = p_2 u_{i J} \prod_{j = J + 1}^{J' - 1} v_{i j}^{m_j} u_{i j} \cdot p_1',
    \end{equation*}
    and then it also has the subset $x (p^m)^* y (p^m)^* z$, where $m = l_1 l_2$.
    Here $y \notin p^*$ and thus $py \ne yp$,
    because otherwise \eqref{eq:pmocc} and \eqref{eq:pmpocc}
    would be contained in the $p^+$-occurrence
    \begin{equation*}
        \Big( u_{i 0} \prod_{j = 1}^{J - 1} v_{i j}^{m_j} u_{i j} \cdot p_1,\
        p^M y p^{M'},\
        p_2' u_{i J'} \prod_{j = J' + 1}^{r_i} v_{i j}^{m_j} u_{i j} \Big),
    \end{equation*}
    and then the maximal occurrences $(w_1, p^N, w_2)$ and $(w_1', p^{N'}, w_2')$ would be the same
    by Lemma~\ref{lem:overlap-maxocc}.

    Finally, we prove Condition~\ref{cond:3}.
    Let $u \in L$.
    There are numbers $i, m_1, \dots, m_{r_i}$ such that
    \begin{equation*}
        u = u_{i 0} \prod_{j = 1}^{r_i} v_{i j}^{m_j} u_{i j}.
    \end{equation*}
    Let $x$ be a factor of $u$ of length at least $k$.
    If it does not have a common factor of length at least $|v_{i j}^{n + 2}|$
    with the factor $v_{i j}^{m_j}$
    for any $j$, then
    \begin{equation*}
        |x| < \sum_{j = 0}^{r_i} |u_{i j}| + \sum_{j = 1}^{r_i} |v_{i j}^{n + 2}| \leq k,
    \end{equation*}
    which is a contradiction.
    So there exists a number $j$ such that $x$ and $v_{i j}^{m_j}$
    have a common factor of length at least $|v_{i j}^{n + 2}|$,
    and this common factor necessarily has a $p^{n + 1}$-occurrence,
    where $p$ is the Lyndon root of $v_{i j}$.
\end{proof}

Now we are ready to prove our main theorem.

\begin{theorem} \label{thm:main}
    A regular language $L$ has a finite SSF
    if and only if
    $L$ does not have a subset of the form $x w^* y w^* z$
    for any words $w, x, y, z$ such that $wy \ne yw$.
\end{theorem}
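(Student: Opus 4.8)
The forward direction is immediate: it is the contrapositive of Lemma~\ref{lem:half-main}, which says that a subset of the form $x w^* y w^* z$ with $wy \ne yw$ already prevents a finite SSF. All the work is in the converse. There I would assume that $L$ has no such subset and produce a number $K$ for which no two distinct words of $L$ are $K$-abelian equivalent; by Lemma~\ref{lem:kabel} this makes $\Sigma^{\leq K}$ a finite SSF.

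First I would reduce to the bounded case. By Lemma~\ref{lem:unbounded} a regular language is either bounded or contains a forbidden subset, so under the hypothesis $L$ is bounded, and I apply Lemma~\ref{lem:nkp} to obtain $n$, $k$ and the finite set of Lyndon words $P$. The role of the hypothesis is to kill the second alternative in Condition~\ref{cond:2}: a subset $x (p^m)^* y (p^m)^* z$ with $py \ne yp$ is itself forbidden, since $py \ne yp$ forces $p^m y \ne y p^m$ (if $y$ commuted with $p^m$ it would be a power of the primitive word $p$ and hence commute with $p$). Thus Condition~\ref{cond:2} reduces to: every $u \in L$ has at most one maximal $p^{\geq n}$-occurrence for each $p \in P$. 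Combining this with Conditions~\ref{cond:1} and~\ref{cond:3}, I would record a normal form
\begin{equation*}
    u = t_0 p_1^{E_1} t_1 \dotsm p_r^{E_r} t_r,
\end{equation*}
where the $p_i \in P$ are pairwise distinct (two blocks with the same root would be two maximal $p^{\geq n}$-occurrences), $r \leq |P|$, each exponent satisfies $E_i \geq n$, and each connector $t_i$ has length less than $k$ (a longer $t_i$ would, by Condition~\ref{cond:3}, contain some $p^{n+1}$ and hence lie inside a block). In particular the \emph{skeleton} $(t_0, p_1, t_1, \dotsm, p_r, t_r)$ ranges over a finite set, and $u$ is recovered from its skeleton together with the exponents.

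Now fix $K$ much larger than $k$ and than $n \max_{p \in P} |p|$, and suppose $u, v \in L$ are $K$-abelian equivalent. The first step is to recover the exponents. Since $p$ is primitive it occurs in $p^E$ only at aligned positions, and by maximality together with Condition~\ref{cond:1} no power $p^j$ with $j \geq n$ occurs outside the unique $p$-block, so for $n \leq j$ with $|p^j| \leq K$ one has $|u|_{p^j} = \max(E_p(u) - j + 1, 0)$, where $E_p(u)$ is the exponent of the $p$-block. Hence $E_p(u) = |u|_{p^n} + n - 1$ is read off from the $K$-abelian class, giving $E_p(u) = E_p(v)$ for all $p \in P$. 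The remaining and main step is to show that the $K$-abelian class also determines the skeleton. For this I would note that factors of length $\leq K$ lying entirely inside a block contribute counts depending only on the now-matched exponents, so the counts of the \emph{boundary} factors — those straddling a connector — must agree for $u$ and $v$. Since $K$ exceeds both $k$ and the periods, a boundary factor of length $K$ is localized to a single connector and encodes that connector together with the primitive roots abutting it; together with the shared prefix of length $K-1$ (which anchors $t_0$ and $p_1$) and the finiteness of the skeleton set, this lets me read the skeleton off left to right and conclude $u = v$.

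The hard part is precisely this last step: extracting the linear arrangement of blocks and connectors from purely local (length-$\leq K$) occurrence counts. Everything before it is bookkeeping on the normal form, but the reconstruction requires arguing that each connector is pinned down — with the correct neighbouring roots and in the correct order — by the multiset of length-$K$ boundary factors, a uniqueness-of-reconstruction statement for a bounded-length skeleton from its long factors, which is where I expect the real care to be needed.
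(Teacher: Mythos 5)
Your setup coincides with the paper's: the forward direction via Lemma~\ref{lem:half-main}, the reduction to bounded languages via Lemma~\ref{lem:unbounded}, the normal form $t_0 p_1^{E_1} t_1 \dotsm p_r^{E_r} t_r$ extracted from Lemma~\ref{lem:nkp} (your observation that $py \ne yp$ forces $p^m y \ne y p^m$, so the hypothesis kills the second alternative of Condition~\ref{cond:2}, is correct and is exactly what the paper uses implicitly), and the recovery of the exponents from $|u|_{p^n} = E_p(u) - n + 1$. But the step you yourself flag as the hard part --- reconstructing the skeleton from occurrence counts --- is not carried out, and the one concrete idea you offer for it fails. A ``boundary factor of length $K$'' is \emph{not} localized to a single connector: the blocks $p_i^{E_i}$ need only satisfy $E_i \geq n$, so a block can have length smaller than $k$, and a factor of length $K \gg k$ straddling one connector will in general swallow the neighbouring blocks whole and spill into further connectors. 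Taking $K$ large therefore does not decouple the connectors, and the multiset of length-$K$ factors does not split into per-connector contributions as your sketch assumes.

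The paper closes this gap with short factors rather than long ones. The bridge from the $i$-th block to the $(i+1)$-st --- the factor beginning with the last $p_i^n$ of the $i$-th block and ending with the first $p_{i+1}^n$ of the next --- contains no $p^{n+1}$ for any $p \in P$ and hence has length less than $k$ by Condition~\ref{cond:3}; moreover it is the \emph{unique shortest} factor of $u$ beginning with $p_i^n$ and ending with $p^n$ for some $p \in P \smallsetminus \{p_i\}$. Since $k$-abelian equivalent words have the same set of factors of length less than $k$, this canonical bridge is the same in $u$ and in $v$, which pins down $p_{i+1}$ and the connector and lets a left-to-right induction advance; the first and last connectors are handled by the common prefix and suffix of length $k - 1$. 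Some such uniqueness device is indispensable; as written, your argument is a correct reduction to an unproved (and, via the localization claim, incorrectly motivated) reconstruction statement.
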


\begin{proof}
    The ``only if'' direction follows from Lemma~\ref{lem:half-main}.
    To prove the ``if'' direction,
    let $n, k, P$ be as in Lemma~\ref{lem:nkp}
    ($L$ is bounded by Lemma~\ref{lem:unbounded}).
    Let $u, v \in L$ be $k$-abelian equivalent.
    We are going to show that $u = v$.
    This proves the theorem by Lemma~\ref{lem:kabel}.
    If $|u| = |v| < k$, then trivially $u = v$, so we assume that $|u| = |v| \geq k$.

    Let $P_j = \{p^i \mid p \in P, i \geq j\}$ for all $j$.
    Let the maximal $P_n$-occurrences in $u$ be
    \begin{equation} \label{eq:umaxocc}
        (x_1, p_1^{m_1}, x'_1), \dots, (x_r, p_r^{m_r}, x'_r),
    \end{equation}
    where $p_1, \dots, p_r \in P$.
    It follows from $|u| \geq k$ and Condition~\ref{cond:3} of Lemma~\ref{lem:nkp} that $r \geq 1$.
    We can assume that the occurrences have been ordered so that $|x_1| \leq \dots \leq |x_r|$.
    By Condition~\ref{cond:2} of Lemma~\ref{lem:nkp},
    the words $p_1, \dots, p_r$ are pairwise distinct.
    All $P_n$-occurrences in $u$
    are contained in one of the maximal occurrences~\eqref{eq:umaxocc}.
    By Condition~\ref{cond:1} of Lemma~\ref{lem:nkp},
    $p^n$ cannot be a factor of $p_j^{m_j}$ if $p \in P \smallsetminus \{p_j\}$,
    so if $p \in P \smallsetminus \{p_1, \dots, p_r\}$,
    then there are no $p^{\geq n}$-occurrences in $u$,
    and all $p_i^{\geq n}$-occurrences are $(x_i p_i^l, p_i^j, p_i^{m_i - j - l} x_i')$
    for $j \in \{n, \dots, m_i\}$ and $l \in \{0, \dots, m_i - j\}$.
    In particular, $|u|_{p_i^n} = m_i - n + 1$.

    Similarly, let the maximal $P_n$-occurrences in $v$ be
    \begin{equation*}
        (y_1, q_1^{n_1}, y'_1), \dots, (y_s, q_s^{n_s}, y'_s),
    \end{equation*}
    where $s \geq 1$ and $q_1, \dots, q_s \in P$.
    As above,
    we can assume that the occurrences have been ordered so that $|y_1| \leq \dots \leq |y_s|$,
    and we can prove that
    the words $q_1, \dots, q_s$ are pairwise distinct,
    $p^n$ cannot be a factor of $q_j^{n_j}$ if $p \in P \smallsetminus \{q_j\}$,
    and if $p \in P \smallsetminus \{q_1, \dots, q_s\}$,
    then there are no $p^{\geq n}$-occurrences in $v$,
    all $q_i^{\geq n}$-occurrences are $(y_i q_i^l, q_i^j, q_i^{n_i - j - l} y_i')$
    for $j \in \{n, \dots, n_i\}$ and $l \in \{0, \dots, n_i - j\}$,
    and $|v|_{q_i^n} = n_i - n + 1$.

    If $p \in P$, then $|p^n| < k$ by Condition~\ref{cond:3} of Lemma~\ref{lem:nkp},
    and then $|u|_{p^n} = |v|_{p^n}$ because $u \kabel{k} v$.
    It follows that $r = s$ and $\{p_1, \dots, p_r\} = \{q_1, \dots, q_s\}$.
    We have seen that $|u|_{p_i^n} = m_i - n + 1$ and $|v|_{q_j^n} = n_j - n + 1$,
    so if $p_i = q_j$, then $m_i = n_j$.

    We prove by induction that $(x_i, p_i, m_i) = (y_i, q_i, n_i)$ for all $i \in \{1, \dots, r\}$.
    First, we prove the case $i = 1$.
    The words $u$ and $v$ have prefixes $x_1 p_1^n$ and $y_1 q_1^n$, respectively.
    There is only one $P_n$-occurrence
    and no $P_{n + 1}$-occurrences in $x_1 p_1^n$.
    Similarly, there is only one $P_n$-occurrence
    and no $P_{n + 1}$-occurrences in $y_1 q_1^n$.
    By Condition~\ref{cond:3} of Lemma~\ref{lem:nkp},
    $|x_1 p_1^n| < k$ and $|y_1 q_1^n| < k$.
    Because $u$ and $v$ are $k$-abelian equivalent, they have the same prefix of length $k - 1$,
    and thus one of $x_1 p_1^n$ and $y_1 q_1^n$ is a prefix of the other.
    If, say, $x_1 p_1^n$ is a prefix of $y_1 q_1^n$,
    then $y_1 q_1^n$ has an occurrence $(x_1, p_1^n, z)$ for some word $z$,
    and this must be the unique $P_n$-occurrence $(y_1, q_1^n, \eps)$.
    It follows that $x_1 = y_1$ and $p_1 = q_1$,
    and then also $m_1 = n_1$.

    Next, we assume that $(x_i, p_i, m_i) = (y_i, q_i, n_i)$ for some $i \in \{1, \dots, r - 1\}$
    and prove that $(x_{i + 1}, p_{i + 1}, m_{i + 1}) = (y_{i + 1}, q_{i + 1}, n_{i + 1})$.
    Let $x_{i + 1} = x_i p_i^{m_i - n} x_i''$
    and $y_{i + 1} = y_i q_i^{n_i - n} y_i'' = x_i p_i^{m_i - n} y_i''$.
    The unique shortest factor in $u$ beginning with $p_i^n$
    and ending with $p^n$ for some $p \in P \smallsetminus \{p_i\}$
    is the factor $x_i'' p_{i + 1}^n$
    starting at position $|x_i p_i^{m_i - n}|$
    and ending at position $|x_{i + 1} p_{i + 1}^n|$.
    Similarly, the unique shortest factor in $v$ beginning with $p_i^n$
    and ending with $p^n$ for some $p \in P \smallsetminus \{p_i\}$
    is the factor $y_i'' q_{i + 1}^n$
    starting at position $|y_i q_i^{n_i - n}| = |x_i p_i^{m_i - n}|$
    and ending at position $|y_{i + 1} q_{i + 1}^n|$.
    There are no $P_{n + 1}$-occurrences in these factors,
    so they are of length less than $k$ by Condition~\ref{cond:3} of Lemma~\ref{lem:nkp},
    and they must be equal because $u \kabel{k} v$.
    It follows that $p_{i + 1} = q_{i + 1}$, $x_i'' = y_i''$, and $x_{i + 1} = y_{i + 1}$,
    and then also $m_{i + 1} = n_{i + 1}$.

    It follows by induction that $x_r p_r^{m_r} = y_r q_r^{n_r}$.
    Because $|u| = |v|$, it must be $|x_r'| = |y_r'|$.
    Because $x_r'$ does not have any $P_{n + 1}$-occurrences,
    $|x_r'| < k$ by Condition~\ref{cond:3} of Lemma~\ref{lem:nkp}.
    Because $u$ and $v$ are $k$-abelian equivalent, they have the same suffix of length $k - 1$,
    so $x_r' = y_r'$.
    Thus $u = v$.
    This completes the proof.
\end{proof}

\begin{example}
    First, consider the language $K = a^* (abab)^* ba (ba)^*$.
    It has a subset
    \begin{math}
        (abab)^* ba (ba)^* = (abab)^* b (ab)^* a,
    \end{math}
    which has a subset
    \begin{math}
        (abab)^* b (abab)^* a.
    \end{math}
    It follows from Theorem~\ref{thm:main} that $K$ does not have a finite SSF.

    Then, consider the language $L = a^* (abab)^* aba (ba)^*$.
    We can write
    \begin{equation*}
        L = a^* (abab)^* (ab)^* aba = a^* (ab)^* aba.
    \end{equation*}
    It can be proved that if $L$ has a subset $x w^* y w^* z$ with $w \ne \eps$,
    then the Lyndon root of $w$ is $a$ or $ab$, and $wy = yw$.
    It follows from Theorem~\ref{thm:main} that $L$ has a finite SSF.
\end{example}

\section{Conclusion}

In this article, we have defined and studied separating sets of factors.
In particular, we have considered the question of whether a given language has a finite SSF.
We have answered this question for sets of factors of infinite words and for regular languages.
In the future, this question could be studied for other families of languages.
We can also ask the following questions:
\begin{itemize}
    \item
    Given a language with a finite SSF,
    what is the minimal size of an SSF of this language?
    For example, this question could be considered for $\Sigma^n$.
    \item
    Given a language with no finite SSF,
    how ``small'' can the growth function of an SSF of this language be?
    For example, this question could be considered for $\Sigma^*$.
\end{itemize}

\bibliography{ref}

\end{document}